\DeclareMathOperator\supp{Supp}
\DeclareMathOperator{\aut}{Aut}
\DeclareMathOperator{\mult}{mult}
\newtheorem{lemma}{Lemma}[section]
\newtheorem{thm}[lemma]{Theorem}
\newtheorem{prop}[lemma]{Proposition}
\newtheorem*{prop*}{Proposition}
\newtheorem{prop_intro}{Proposition}
\newtheorem{thm_intro}[prop_intro]{Theorem}
\theoremstyle{definition}
\newtheorem{defn}[lemma]{Definition}
\newtheorem{rem}[lemma]{Remark}
\theoremstyle{definition}
\definecolor{darkgreen}{cmyk}{1,0,1,.2}
\newcommand{\alt}{\ensuremath{\mathrm{alt}}}
\newcommand{\R} {\ensuremath {\mathbb{R}}}
\newcommand{\G} {\ensuremath {\Gamma}}
\newcommand{\calS} {\ensuremath {\mathcal{S}}}
\newcommand{\calM} {\ensuremath {\mathcal{M}}}
\newcommand{\calA} {\ensuremath {\mathcal{A}}}
\newcommand{\calU} {\ensuremath {\mathcal{U}}}
\renewcommand{\phi}{\varphi}
\newcommand{\calK} {\ensuremath {\mathcal{K}}}
\newcommand{\cal}{\mathcal}
\begin{document}

\title[]{Amenable covers and $\ell^1$-invisibility}

\author[]{R. Frigerio}
\address{Dipartimento di Matematica, Universit\`a di Pisa, Largo B. Pontecorvo 5, 56127 Pisa, Italy}
\email{roberto.frigerio@unipi.it}
\thanks{}

\keywords{bounded cohomology; simplicial volume; multicomplex; amenable group.}
\subjclass[2010]{55N10 (primary); 55N35, 20J06 (secondary).}

\begin{abstract}
Let $X$ be a topological space admitting an amenable cover of multiplicity $k\in\mathbb{N}$. We show that, for every $n\geq k$ and every $\alpha\in H_n(X;\R)$, the image
of $\alpha$ in the $\ell^1$-homology module $H_n^{\ell^1}(X;\R)$ vanishes. This strenghtens previous results by Gromov and Ivanov, who proved, under the same assumptions,
that the $\ell^1$-seminorm of $\alpha$ vanishes.
\end{abstract}
\maketitle

Gromov's Vanishing Theorem~\cite{Grom82} asserts that, if a topological space $X$ admits an amenable cover of multiplicity $k$, then for every $n\geq k$
the comparison map $H^n_b(X;\R)\to H^n(X;\R)$ between the singular bounded cohomology  and the ordinary singular cohomology of $X$  is null
(see also~\cite[Corollary 6.3]{Ivanov}, \cite[Theorem 9.2]{ivanov3} and~\cite[Theorem 6]{FriMo}).
Via a by now standard duality argument, this implies the following:

\begin{thm_intro}\label{classico}
Let $X$ be a topological space admitting an amenable open cover of multiplicity $k$ and let $n\geq k$. Then
 the $\ell^1$-seminorm of every element 
$\alpha\in H_n(X;\R)$ vanishes.
\end{thm_intro}

We say that a class $\alpha\in H_n(X;\R)$ is \emph{$\ell^1$-invisible} if $\iota_n(\alpha)=0$, where
$$
\iota_*\colon H_*(X;\R)\to H_*^{\ell^1}(X;\R)
$$
is the map induced by the inclusion of singular chains into $\ell^1$-chains (see Section~\ref{preliminary:sec}). It is 
easy to prove that 
$\ell^1$-invisible classes have vanishing $\ell^1$-seminorm (see 
Remark~\ref{contro:rem} for a proof of this fact and a brief discussion of the converse implication).

In this paper we strengthen Theorem~\ref{classico} by proving the following:

\begin{thm_intro}\label{main:thm}
Let $X$ be a topological space admitting an amenable open cover of multiplicity $k$. Then, for every $n\geq k$ and every $\alpha\in H_n(X;\R)$, the class
$\alpha$ is $\ell^1$-invisible.
\end{thm_intro}

Our argument  relies upon diffusion of chains and the theory of multicomplexes, as developed in~\cite{FriMo}.
Bounded cohomology plays a fundamental role in allowing us to work with the \emph{aspherical} multicomplex associated to $X$, thus forgetting all the higher homotopy 
groups of $X$. However, the final (and fundamental) step in the proof of Theorem~\ref{main:thm} is purely homological (see Remark~\ref{aspherical:rem}). 
In fact, as observed in~\cite[p.~258]{Loeh}, bounded cohomology can detect only whether the seminorm of a given class in 
 $\ell^1$-homology is zero, but not whether the class itself is zero. Therefore,  Theorem~\ref{main:thm} cannot be deduced from vanishing theorems for bounded cohomology. 

Our terminology is inspired by the paper~\cite{Loeh}, where a closed oriented $n$-dimensional manifold $M$ is defined to be \emph{$\ell^1$-invisible} if the real fundamental class
$[M]\in H_n(M,\R)$ is $\ell^1$-invisible. The $\ell^1$-invisibility of manifolds comes into play e.g.~when studying the simplicial volume of open manifolds: it is proved in~\cite{Loeh} that, if $N$ is a closed oriented $(n+1)$-manifold with boundary, then the simplicial volume of $N\setminus\partial N$ is finite if and only
if every component of $\partial N$ is $\ell^1$-invisible.

\section{Preliminaries}\label{preliminary:sec}

\subsection*{Amenable covers}

Let $X$ be a topological space and let $i\colon U\hookrightarrow X$ be the inclusion of a subset $U$ of $X$. Then
$U$ is \textit{amenable} (in $X$) if for every $x_0\in U$ the image of 
$i_{*} \colon \pi_{1}(U,x_0) \rightarrow \pi_{1}(X,x_0)$ is an amenable subgroup of $\pi_{1}(X,x_0)$ (the set $U$ is not assumed to be path connected).

Let $\calU=\{U_i\}_{i\in I}$ be a cover of $X$, i.e.~suppose that $U_i\subseteq X$ for every $i\in I$ and that $X=\bigcup_{i\in I} U_i$. We say that the cover
is \emph{open} if each $U_i$ is open in $X$, and \emph{amenable} if each $U_i$ is amenable in $X$. The multiplicity of $\calU$ is defined by
\begin{align*}
\mult(\calU)&=\sup \left\{\#J\, \Big|\, J\subseteq I,\, \bigcap_{i\in J} U_{i}\neq\emptyset\right\}\ , 
\end{align*}
where $\# J$ denotes the cardinality of $J$.

\subsection*{$\ell^1$-homology and bounded cohomology}
We denote by $C_* (X)$ (resp.~by $C^*(X)$) the 
complex of singular chains (resp.~cochains) on $X$ with real coefficients,  and
by $S_i (X)$ the set of singular $i$--simplices
with values in $X$.

For every $n\geq 0$ we endow the space $C_n(X)$ with the $\ell^1$-norm
$$
\Bigg\| \sum_{\sigma \in S_n(X)} a_\sigma \sigma\Bigg\|_1 =\sum_{\sigma \in S_n(X)}  |a_\sigma|
$$
(here $a_\sigma=0$ for all but a finite number of $\sigma\in S_n(X)$). 
This norm restricts to a norm on the subspace of $n$-cycles, which descends in turn to a quotient seminorm $\|\cdot \|_1$ on the singular homology
with real coefficients $H_n(X)$. 

Let $C_n^{\ell^1}(X)$ denote the metric completion of $(C_n(X),\|\cdot\|_1)$, i.e.~the space of
(possibly infinite) linear combinations 
$$
\sum_{\sigma \in S_n(X)} a_\sigma \sigma\, ,\qquad  \sum_{\sigma \in S_n(X)}  |a_\sigma|<+\infty\ .
$$
 The boundary operator $\partial_n \colon C_n(X)\to C_{n-1}(X)$
is bounded with respect to  the $\ell^1$-norm, hence it extends to a boundary operator  $\partial_n \colon C_n^{\ell^1}(X)\to C_{n-1}^{\ell^1}(X)$.
It is easily checked that $\partial_{n-1}\circ \partial_n\colon C_n^{\ell^1}(X)\to C_{n-2}^{\ell^1}(X)$ is the zero map for every $n\in\mathbb{N}$,
hence one may define the homology of the complex $C_*^{\ell^1}(X)$, which is called \emph{$\ell^1$-homology} of $X$ and is denoted by
$H_*^{\ell^1}(X)$. The norm on $C_n^{\ell^1}(X)$ induces a quotient seminorm on $H_n^{\ell^1}(X)$ for every $n\in\mathbb{N}$.
The inclusion of complexes $C_*(X)\hookrightarrow C_*^{\ell^1}(X)$ induces a  map
$$
\iota_*\colon H_*(X)\to H_*^{\ell^1}(X)\ .
$$

Since $C_n(X)$ is dense in $C_n^{\ell^1}(X)$, any
continuous functional on $C_n(X)$ uniquely extends to a continuous functional on $C_n^{\ell^1}(X)$. We may thus denote by $C^n_b(X)$ the 
\emph{topological} dual space both of $C_n(X)$ and of $C_n^{\ell^1}(X)$. Of course, being functionals on $C_n(X)$, elements of $C^n_b(X)$ are
in particular singular cochains, i.e.~the space $C^n_b(X)$ is a subspace of $C^n(X)$. More precisely, if 
 we define the $\ell^\infty$-norm of an element $\varphi\in C^n (X)$ by setting 
$$
\|\varphi\|_\infty  = \sup \left\{|\varphi (s)|\, ,\  s\in S_n (X)\right\}\in [0,\infty]\ ,
$$
then we have
$$
C^n_b(X)=\{\varphi\in C^n(X)\, , \  \|\varphi\|_\infty<+\infty\}\ .
$$
Moreover, the $\ell^\infty$-norm $\|\varphi\|_\infty$ of an element $\varphi\in C^n_b(X)$ coincides with its dual norm as a functional on $C_n(X)$ (or on $C_n^{\ell^1}(X)$). 
 
 Since
the differential takes bounded cochains to bounded cochains, $C^*_b (X)$
is a subcomplex of $C^*(X)$.
We denote by $H^*(X)$ (resp.~$H_b^*(X)$) 
the cohomology of the complex $C^*(X)$ (resp.~$C_b^*(X)$).
Of course, $H^*(X)$ is the usual real singular cohomology module of $X$, while $H_b^*(X)$
is the \emph{real bounded cohomology module} of $X$.
Just as for ($\ell^1$-)homology, the norm on $C^i_b (X)$ descends to a quotient seminorm on $H_b^*(X)$.

The inclusion of bounded cochains into possibly unbounded cochains
induces the \emph{comparison map}
$$
c^* \colon  H_b^*(X)\to H^*(X)\ .
$$ 

Any continuous map $f\colon X\to Y$ induces norm non-increasing maps 
\begin{align*}
H_n(f)&\colon H_n(X)\to H_n(Y)\ ,\\
H_n^{\ell^1}(f)&\colon H_n^{\ell^1}(X)\to H_n^{\ell^1}(Y)\ ,\\
H^n_b(f)&\colon H^n_b(Y)\to H^n_b(X)\ .
\end{align*}
If $f$ induces an isomorphism on fundamental groups, then both
$H_n^{\ell^1}(f)$ and $H^n_b(f)$ are isometric isomorphisms for every $n\in\mathbb{N}$ 
(see~\cite[p.~40]{Grom82}, \cite[Theorem 8.4]{ivanov3}, \cite[Theorem 3]{FriMo} for the map $H^n_b(f)$,
and~\cite{Loeh} for the map $H_n^{\ell^1}(f)$).

\begin{rem}\label{contro:rem}
Let $X$ be a topological space, and take $\alpha\in H_n(X)$. It is not difficult to show that, if $\iota_n(\alpha)=0$, then $\|\alpha\|_1=0$.
In fact, if $\alpha$ is $\ell^1$-invisible and $z\in C_n(X)$ is a representative of $\alpha$, then there exists $b\in C_{n+1}^{\ell^1}(X)$ such that
$z=\partial_{n+1} b$. For any given $\varepsilon>0$, we may now write $b=b_0+b_1$, where 
$b_0\in C_n(X)$ is a finite chain, and $\|b_1\|_1<\varepsilon/(n+2)$. Then $\alpha=[z-\partial_{n+1} b_0]$, and
$$
\|z-\partial_{n+1} b_0\|_1=\|\partial_{n+1} b-\partial_{n+1} b_0\|_1=\|\partial_{n+1} b_1\|_1\leq (n+2)\|b_1\|_1\leq \varepsilon\ .
$$
This shows that $\|\alpha\|_1=0$. (More in general, as observed in~\cite{Loeh}, the fact that $C_n(X)$ is dense in $C_n^{\ell^1}(X)$ for every $n\in\mathbb{N}$
implies that 
$\iota_n\colon H_n(X)\to H_n^{\ell^1}(X)$ is a seminorm-preserving map for every $n\in\mathbb{N}$.)

As far as the author
knows, the question whether
the vanishing of  $\|\alpha\|_1$  implies that $\iota_n(\alpha)=0$ is still open (see e.g.~\cite[p.~258]{Loeh}, where
it is asked whether every oriented, closed, connected manifold with vanishing simplicial volume is already $\ell^1$-invisible).

On the contrary, if $\alpha$ belongs to $H_*^{\ell^1}(X)$, then the vanishing of $\|\alpha\|_1$ does not imply in general the vanishing of $\alpha$. 
In fact, by~\cite[Theorem 2.3]{Matsu-Mor} the existence of non-trivial classes with vanishing $\ell^1$-seminorm in $H^{\ell^1}_n(X)$ is equivalent
to the existence of non-trivial classes with vanishing $\ell^\infty$-seminorm in $H^{n+1}_b(X)$. 
Thanks to~\cite[Theorems 1 and 2]{Soma2} (and the fact that the bounded cohomology
of a space is isometrically isomorphic to the bounded cohomology of its fundamental group -- see~\cite{Grom82}, \cite[Theorem 8.3]{ivanov3}), 
this implies that for every $n\in \mathbb{N}\setminus \{0,1,3\}$ there exist a topological space $X$ and a non-trivial class $\alpha\in H_n^{\ell^1}(X)$
such that $\|\alpha\|_1=0$.

In fact, while the seminorm on $H^2_b(X)$ is a norm for every topological space $X$~\cite{Matsu-Mor, Ivanov2}, the space $H^3_b(X)$ contains non-trivial classes with vanishing
seminorm if $X$ is a closed hyperbolic surface~\cite{Soma1}, if $\pi_1(X)$ is free non-abelian~\cite{Soma2}, or, more in general, if $\pi_1(X)$ is 
acylindrically hyperbolic~\cite{FFPS}. Starting from these examples, higher dimensional non-trivial classes with vanishing seminorm can then be obtained
by taking suitable cup products~\cite{Soma2}. 
\end{rem}

\subsection*{Multicomplexes}
Multicomplexes are simplicial structures introduced by Gromov in~\cite{Grom82}. 
A multicomplex is  a regular unordered $\Delta$-complex (see e.g.~\cite[p.~533--535]{hatcher}), i.e.~a $\Delta$-complex  in which the vertices of each simplex are unordered and distinct (but distinct simplices
may share the same set of vertices). In other words,
(the geometric realization of) a multicomplex
is ``a set $K$ divided
into the union of closed affine simplices $\Delta_i$, $i\in I$,
such that the intersection of any
two simplices $\Delta_i\cap \Delta_j$ is a (simplicial) subcomplex in $\Delta_i$ as well as in $\Delta_j$''~\cite{Grom82}.
We refer the reader to~\cite[Chapter 1]{FriMo} for a thorough discussion of the notion of multicomplex. 

The geometric realization $|K|$ of a  multicomplex $K$  is constructed in the very same way as the geometric realization of simplicial sets or of simplicial complexes:
one takes one geometric simplex for every combinatorial simplex of the structure, and glue these simplices to each other  according to the combinatorics of $K$
(see e.g.~\cite[Section 1.2]{FriMo}). As a topological space, $|K|$
is endowed with the weak topology associated to its decomposition into simplices. 

For every $n\in\mathbb{N}$ we denote by $K^n$ the $n$-skeleton of $K$, i.e.~the submulticomplex of $K$ obtained by taking all the $k$-dimensional simplices of $K$ for $k\leq n$.
We denote by $\aut(K)$ the group of simplicial automorphisms of $K$. 

If $f,g\colon K\to K'$ are simplicial maps between multicomplexes, we say that $f$ is simplicially homotopic to $g$
if there exists a simplicial map $F\colon K\times [0,1]\to K'$ such that $F\circ i_0=f$ and $F\circ i_1=g$, where $i_j\colon K\to K\times \{j\}\subseteq K\times [0,1]$ is the obvious inclusion
(we refer the reader to~\cite[Definitions 3.3.2 and 3.3.3]{FriMo} for the definition of the multicomplex $K\times [0,1]$ and more details on simplicial homotopies).

Let $K$ be a multicomplex. An $n$-dimensional \emph{algebraic simplex} of $K$ is a pair
$$
\sigma=(\Delta,(v_0,\ldots,v_n))\ ,
$$
where $\Delta$ is a $k$-simplex of $K$, and $\{v_0,\ldots,v_n\}$ is equal to the set of vertices of $\Delta$.
Recall that, if $\Delta$ is a $k$-simplex of a multicomplex, then $\Delta$ has exactly $k+1$ distinct vertices, hence $k\leq n$. However,
we do not require the element of the ordered $(n+1)$-tuple $(v_0,\ldots,v_n)$ to be pairwise distinct.
For $i=0,\ldots,n$, the $i$-th face of $\sigma$ is the algebraic $(n-1)$-simplex defined by
$$
\partial_n^i \sigma= (\Delta', (v_0,\ldots,\widehat{v}_i,\ldots,v_n))\ ,
$$
where $\Delta'=\Delta$ if $\{v_0,\ldots,v_n\}=\{v_0,\ldots, \widehat{v}_i,\ldots, v_n\}$, and $\Delta'$ is the unique face of $\Delta$ whose set of vertices is given by $\{v_0,\ldots,\widehat{v}_i,\ldots,v_n\}$ otherwise.

The space $C_n(K)$ of real simplicial $n$-chains on $K$
is the real vector space having the set of algebraic $n$-simplices as a basis. The map 
$$
\partial_n \colon C_n(K)\to C_{n-1}(K)\, ,\qquad \partial_n=\sum_{i=0}^n (-1)^i \partial_n^i
$$
endows $C_*(K)$ with the structure of a chain complex, whose homology $H_*(K)$ is the simplicial homology of $K$.
We put on $C_*(K)$ the obvious $\ell^1$-norm, which defines as usual an $\ell^1$-seminorm on $H_*(K)$. 
We also denote by $C_*^{\ell^1}(K)$ the metric completion of $C_*(K)$, and by $H_*^{\ell^1}(K)$ the corresponding
$\ell^1$-homology module, endowed with the quotient seminorm.

We denote by $C^*(K)$ (resp.~$C_b^*(K)$) the complex of real simplicial cochains (resp.~of bounded real simplicial cochains)
on $K$, and by $H^*(K)$ (resp.~$H_b^*(K)$) the corresponding cohomology module. 
Just as in the singular case, $C^*_b(K)$  is endowed with the $\ell^\infty$-norm dual 
to the $\ell^1$-norm on $C_*(K)$, which descends to a quotient $\ell^\infty$-seminorm on $H^*_b(K)$.

We have natural chain inclusions
$$
\phi_*\colon C_*(K)\to C_*(|K|)\, ,\qquad \phi_*^{\ell^1}\colon C_*^{\ell^1}(K)\to C_*^{\ell^1}(|K|)\ ,
$$
which induce maps
$$
H_*(\phi_*)\colon H_*(K)\to H_*(|K|)\, ,\qquad H_*(\phi_*^{\ell^1})\colon H_*^{\ell^1}(K)\to H_*^{\ell^1}(|K|)\ .
$$

\begin{prop}[{\cite[Theorem 1.4.4]{FriMo}}]\label{simpl:iso}
The map $H_*(\phi_*)$
is an isomorphism in every degree.
\end{prop}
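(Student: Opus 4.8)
The plan is to prove that the chain map $\phi_*\colon C_*(K)\to C_*(|K|)$ is a quasi-isomorphism, from which the statement follows since a quasi-isomorphism induces an isomorphism in homology. First I would reduce to finite skeleta. Since an algebraic $m$-simplex has underlying simplex of dimension at most $m$, one has $C_m(K)=C_m(K^n)$ for every $n\geq m$; hence $C_*(K)=\varinjlim_n C_*(K^n)$ and, as homology commutes with filtered colimits, $H_*(K)=\varinjlim_n H_*(K^n)$. On the other side, $|K|$ carries the weak topology, so the image of any singular simplex is contained in a finite subcomplex and therefore in some $|K^n|=|K|^n$; thus $H_*(|K|)=\varinjlim_n H_*(|K^n|)$. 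Since $\phi_*$ is compatible with both filtrations, it suffices to show that $H_*(\phi_*)\colon H_*(K^n)\to H_*(|K^n|)$ is an isomorphism for every $n$, which I would establish by induction on $n$.

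The base case $n=0$ is immediate, since both complexes reduce to the free vector space on the set of vertices. For the inductive step I would compare the long exact sequences of the pairs $(K^n,K^{n-1})$ (in simplicial homology) and $(|K^n|,|K^{n-1}|)$ (in singular homology); the map $\phi_*$ induces a morphism between these sequences, so by the five lemma the inductive step reduces to showing that $\phi_*$ induces an isomorphism on relative homology $H_*(C_*(K^n)/C_*(K^{n-1}))\to H_*(|K^n|,|K^{n-1}|)$.

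The heart of the argument is this relative computation, and it is precisely where the degenerate algebraic simplices (those whose ordered tuple $(v_0,\dots,v_n)$ has repetitions) must be handled. The singular side is standard: $H_*(|K^n|,|K^{n-1}|)$ is concentrated in degree $n$ and free on the set of $n$-simplices of $K$. For the simplicial side I would observe that $C_*(K^n)/C_*(K^{n-1})$ splits as a direct sum over the $n$-simplices $\Delta$ of $K$, the summand of $\Delta$ being generated by the algebraic simplices whose underlying simplex is exactly $\Delta$, that is, by the tuples that surject onto the vertex set $V(\Delta)$, with those faces failing to be surjective (and hence landing in $C_*(K^{n-1})$) set to zero. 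This summand is exactly the ordered chain complex of the pair $(\Delta,\partial\Delta)$; since the ordered chains of a simplex are acyclic in positive degrees, the long exact sequence of $(\Delta,\partial\Delta)$ gives homology $\cong\R$ in degree $n$ and $0$ otherwise. Summing over $\Delta$, the relative simplicial homology is concentrated in degree $n$ and free on the $n$-simplices of $K$, exactly as on the singular side. I expect this step—checking that, for each $n$-simplex, the ordered and degenerate algebraic chains contribute acyclically away from the top degree—to be the main obstacle, since it is exactly the feature that distinguishes $C_*(K)$ from the naive simplicial chains of $|K|$.

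Finally I would verify that, under $\phi_*$, the two degree-$n$ generators match up to sign: a nondegenerate algebraic $n$-simplex $(\Delta,(v_0,\dots,v_n))$ is sent to the characteristic singular simplex of the cell $\Delta$ with the corresponding orientation, which generates the $\Delta$-summand of $H_n(|K^n|,|K^{n-1}|)$. Hence $H_*(\phi_*)$ is an isomorphism on relative homology; the five lemma then completes the inductive step, and passing to the colimit over $n$ yields the desired isomorphism $H_*(K)\cong H_*(|K|)$ in every degree.
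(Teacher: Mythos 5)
The paper itself offers no proof of this proposition: it is imported verbatim from \cite[Theorem 1.4.4]{FriMo}, so there is no in-paper argument to compare against. Your proposal is the natural self-contained route: the classical skeletal induction with the five lemma (as in the standard proof that $\Delta$-complex homology agrees with singular homology), adapted to the one feature that makes multicomplex chains genuinely different, namely the degenerate algebraic simplices. Your identification of the relative complex $C_*(K^n)/C_*(K^{n-1})$ with a direct sum, over the $n$-simplices $\Delta$ of $K$, of relative \emph{ordered} chain complexes of the pairs $(\Delta,\partial\Delta)$ is exactly the right move, and it also correctly handles the multicomplex phenomenon that several distinct $n$-simplices may share the same vertex set (each still contributes its own summand). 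The colimit reduction to finite skeleta, the map of long exact sequences, and the matching of generators (surjectivity onto a one-dimensional target suffices) are all sound.

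One step is under-justified, though it is repairable by citing a classical fact. From the long exact sequence of the ordered-chain pair $(\Delta,\partial\Delta)$, the acyclicity of $C^{\mathrm{ord}}_*(\Delta)$ only yields $H_m\bigl(C^{\mathrm{ord}}_*(\Delta,\partial\Delta)\bigr)\cong \widetilde{H}_{m-1}\bigl(C^{\mathrm{ord}}_*(\partial\Delta)\bigr)$; to conclude that the relative homology is $\R$ concentrated in degree $n$, you additionally need that the ordered chains of the simplicial complex $\partial\Delta$ compute the homology of $S^{n-1}$. This does not follow from the acyclicity of the ordered chains of a simplex alone: it is the statement that ordered chain homology coincides with simplicial (alternating) homology for $\partial\Delta^n$, a classical theorem (see e.g.~\cite{munkres}, the comparison of ordered and oriented chain complexes), which you should either invoke explicitly or re-prove by a nested induction. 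Relatedly, your base case as stated (``both complexes reduce to the free vector space on the set of vertices'') is not literally true: $C_m(K^0)$ contains the degenerate algebraic simplices $(v,(v,\dots,v))$ in every degree $m$, so $C_*(K^0)$ is the ordered chain complex of a discrete set; its homology is indeed free on the vertices and concentrated in degree $0$, but this is again an instance of the same ordered-chain computation rather than being immediate. Neither issue breaks the argument; both are standard and fixable, so the proposal is essentially correct.
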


\subsection*{The singular multicomplex}
To our purposes, the most important example of multicomplex is the \emph{singular multicomplex} associated to a topological space. 
The singular multicomplex plays in the theory of multicomplexes the same role played by the singular simplicial set in the context of simplicial sets.
If $X$ is a topological space,
the singular simplicial set  $\calS(X)$ of $X$ is the simplicial set having as simplices the singular simplices with values in $X$ (see e.g.~\cite{milnor-geom, Piccinini}). The simplices of the singular simplicial set are not embedded in general (for example, a non-constant singular $1$-simplex $\sigma \colon [0,1]\to X$ with $\sigma(0)=\sigma(1)$
gives rise to a non-embedded $1$-simplex of $\calS(X)$). 
The singular multicomplex $\mathcal{K}(X)$ is the multicomplex having as simplices
the singular simplices in $X$ with distinct vertices, up to affine symmetries. Therefore, the singular multicomplex differs from the singular simplicial set
both because of the requirement that singular simplices be injective on vertices, and because the geometric simplices of the singular multicomplex
do not come with a preferred ordering of their vertices (or of their faces).

The geometric realization of $\mathcal{K}(X)$ is a CW complex whose $0$-cells are in bijection with the points of $X$. Moreover, there is a natural continuous map
$
S_X \colon \lvert \mathcal{K}(X) \rvert \rightarrow X
$, which, under very mild conditions on the  space $X$,  is a weak homotopy equivalence
(for example, it is a homotopy equivalence provided that $X$ is a CW complex~\cite[Corollary 2.1.3]{FriMo}).

To the singular multicomplex $\calK(X)$ there is associated an aspherical multicomplex $\calA(X)$ which is defined as follows:
the $0$-skeleton of $\calA(X)$ coincides with the $0$-skeleton of $\calK(X)$ (hence, with $X$); the $1$-skeleton $\calA^1(X)$ of $\calA(X)$ is obtained by choosing
 one representative in every homotopy class (relative to the endpoints) of
$1$-simplices of $\calK(X)$; finally, if $G\subseteq \calA^1(X)$ is a graph isomorphic to the $1$-skeleton of the standard simplex $\Delta^n$,
then $\calA(X)$ contains exactly one $n$-dimensional simplex with $1$-skeleton $G$ if and only if there is some (possibly, more than one) $n$-simplex in $\calK(X)$
with $1$-skeleton $G$. One can define a simplicial projection
$$
\pi\colon \calK(X)\to \calA(X)
$$
that restricts to the identity of $\calK(X)^0=\calA(X)^0$, and induces an isomorphism on fundamental groups
(see~\cite[Chapter 3]{FriMo} for more details).

\section{The group $\Pi(X,X)$ and its action on $\calA(X)$}\label{am:sub:sec}
Let $X$ be a topological space. We recall the definition of 
the group $\Pi(X,X)$, which was first introduced by Gromov in~\cite{Grom82}.

\begin{defn}
Let $X_0$ be a subset of $X$. We define the set $\Omega(X,X_0)$ as follows. An element of $\Omega(X,X_0)$ is a family of paths $\{\gamma_x\}_{x\in X_0}$ satisfying the following conditions:
\begin{enumerate}
 \item each $\gamma_x\colon [0,1]\to X$
is a continuous path such that $\gamma_x(0)=x$ and $\gamma_x(1)\in X_0$;
\item  for all but finitely many $x\in X_0$, the path $\gamma_x$ is constant (i.e.~$\gamma_x(t)=x$ for every $t\in [0,1]$);
\item the map 
$$
X_0\to X_0\ ,\qquad x\mapsto \gamma_x(1)
$$
is a bijection of $X_0$ onto itself (which, by (2), is a permutation of $X_0$ with finite support).
\end{enumerate}

The usual concatenation of paths defines a multiplication on $\Omega(X,X_0)$, which endows $\Omega(X,X_0)$ with the structure of a  semigroup. 
In order
to obtain a group we consider the set $\Pi(X,X_0)$ of homotopy classes of elements of $\Omega(X,X_0)$, where two elements $\{\gamma_x\}_{x\in X_0}$, $\{\gamma'_x\}_{x\in X_0}$ of $\Omega(X,X_0)$ are said to be homotopic if 
$\gamma_x$ is homotopic to $\gamma_x'$ relative to the endpoints for every $x\in X_0$ (in particular, $\gamma_x(1)=\gamma_x'(1)$ for every $x\in\ X_0$).
\end{defn}

For any pair of subsets $U,V$ of $X$ with $U\subseteq V\subseteq X$ we have an obvious group homomorphism
$$
\Pi(U,V)\to \Pi(X,X)\ .
$$
We denote by $\Pi_X(U,V)$ the image of this homomorphism in $\Pi(X,X)$. 

\begin{lemma}[{\cite[Lemma 6.2.3]{FriMo}}]\label{prop-psi-pi-u-v-amenable}
Let $U$ be an amenable subset of $X$, and take any subset $V\subseteq U$. Then, the subgroup $\Pi_X(U,V)<\Pi(X,X)$ is amenable.
\end{lemma}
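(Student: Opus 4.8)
The plan is to present $\Pi_X(U,V)$ as an extension of an amenable group by an amenable group, and then to conclude by the standard permanence properties of amenability (closure under subgroups, quotients, directed unions, and extensions).

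First I would introduce the \emph{endpoint} homomorphism
$$
\varepsilon\colon \Pi(X,X)\longrightarrow \simf(X)\, ,\qquad \{\gamma_x\}_{x\in X}\longmapsto \bigl(x\mapsto \gamma_x(1)\bigr)\, ,
$$
where $\simf(X)$ denotes the group of finitely supported permutations of $X$. This is well defined by condition (3) in the definition of $\Omega(X,X)$ and descends to homotopy classes (homotopic families share the same endpoints), and it is a homomorphism because concatenating two families composes their endpoint permutations. Restricting $\varepsilon$ to $\Pi_X(U,V)$ yields a short exact sequence
$$
1\longrightarrow N\longrightarrow \Pi_X(U,V)\stackrel{\varepsilon}{\longrightarrow} Q\longrightarrow 1\, ,
$$
with $Q=\varepsilon(\Pi_X(U,V))$ and $N=\ker\bigl(\varepsilon|_{\Pi_X(U,V)}\bigr)$; the whole argument then reduces to showing that $N$ and $Q$ are amenable.

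For the kernel $N$, recall that every element of $\Pi_X(U,V)$ is the image under $\Pi(U,V)\to\Pi(X,X)$ of a family $\{\gamma_x\}_{x\in V}$ of paths in $U$, extended by constant paths outside $V$. Such an element lies in $N$ precisely when its endpoint permutation is trivial, i.e.\ when $\gamma_x(1)=x$ for all $x$; hence each $\gamma_x$ is a loop in $U$ based at $x$, all but finitely many constant. Passing to the induced class in $\pi_1(X,x)$ identifies $N$ with the restricted direct sum $\bigoplus_{x\in V} A_x$, where $A_x=\im\bigl(\pi_1(U,x)\to\pi_1(X,x)\bigr)$. Since $U$ is amenable in $X$, each $A_x$ is amenable, and a restricted direct sum of amenable groups is a directed union of finite sub-products, hence amenable. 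Therefore $N$ is amenable.

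For the quotient $Q$, I would observe that $\varepsilon$ maps $\Pi_X(U,V)$ into the finitely supported permutations of $V$, so $Q$ is a subgroup of $\simf(V)$. The group $\simf(V)$ is locally finite --- any finite collection of its elements moves only finitely many points and hence generates a finite group --- so it is a directed union of finite groups and is amenable; its subgroup $Q$ is then amenable as well. Combining the two facts, the extension above shows that $\Pi_X(U,V)$ is amenable. The only point requiring genuine care is the bookkeeping in the kernel computation: one must verify that $\Pi(U,V)\to\Pi(X,X)$ carries loop families exactly onto $\bigoplus_{x\in V}A_x$ and that $\varepsilon$ restricts correctly, so that $N$ equals this direct sum and nothing larger. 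Once this identification is secured, amenability follows purely formally from the permanence properties.
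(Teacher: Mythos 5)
Your proof is correct. Note that this paper does not actually prove the lemma --- it is quoted directly from \cite[Lemma~6.2.3]{FriMo} --- and your argument (the endpoint-permutation short exact sequence $1\to N\to \Pi_X(U,V)\stackrel{\varepsilon}{\to} Q\to 1$, with $N\cong\bigoplus_{x\in V}\im\bigl(\pi_1(U,x)\to\pi_1(X,x)\bigr)$ amenable because $U$ is amenable in $X$, and $Q\leq\simf(V)$ amenable because it is locally finite) is essentially the same as the one given in that reference, including the key verification that the map $\Pi(U,V)\to\Pi(X,X)$ preserves endpoint permutations, so that the kernel consists exactly of images of loop families.
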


\subsection*{The action of $\Pi(X,X)$ on $\calA(X)$}
The group $\Pi(X,X)$ acts simplicially on $\calA(X)$ as follows.
Take $g\in\Pi(X,X)$, and let $\{\gamma_x\}_{x\in X}$ be a representative of $g$.
The set of vertices of $\calA(X)$ is canonically identified with $X$, and every element of $\Pi(X,X)$ induces a permutation (with finite support) of $X$.
Therefore, we can define the action of $g$ on the $0$-skeleton of $\calA(X)$ as the permutation induced by $g$ on $X$.

Let now $e$ be a $1$-simplex of $\calA(X)$ with endpoints $v_0,v_1\in \calA^0(X)=X$. By definition, $e$ represents a homotopy class
(relative to the endpoints) of paths in $X$ joining $v_0$ with $v_1$. Let $\gamma_e\colon [0,1]\to X$ be a representative of $e$ with $\gamma_e(0)=v_0$, $\gamma_e(1)=v_1$, and consider the concatenation 
$\gamma'\colon [0,1]\to X$ given by
$$
\gamma'=\gamma_{v_0}^{-1} * 
\gamma_e * \gamma_{v_1}\ ,
$$
where as usual we denote by $\gamma^{-1}$ the path $\gamma^{-1}(t)=\gamma(1-t)$. We then set $g\cdot e$ to be equal to the homotopy class (relative to the endpoints)
of the path $\gamma'$. It is easy to check that this definition of $g$ on the $1$-skeleton of $\calA(X)$ indeed extends the action of $g$ on the $0$-skeleton.
Moreover, it is proved in~\cite[Section 5.2]{FriMo} that the simplicial automorphism of $\calA(X)^1$ just described uniquely extends to a simplicial automorphism
of the whole $\calA(X)$, which will be denoted by $\psi(g)$. We have thus defined an action
$$
\psi\colon \Pi(X,X)\to \aut(\calA(X))\ .
$$

\begin{prop}[{\cite[Theorem 5.2.1]{FriMo}}]
For every $g\in \Pi(X,X)$, the automorphism $\psi(g)$ is simplicially homotopic to the identity. 
\end{prop}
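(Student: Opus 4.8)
The plan is to construct an explicit simplicial homotopy $F\colon \calA(X)\times[0,1]\to\calA(X)$ from $\id$ to $\psi(g)$ which lets each vertex $x$ slide along the path $\gamma_x$, where $\{\gamma_x\}_{x\in X}$ is a representative of $g$. Recall from~\cite[Definitions 3.3.2 and 3.3.3]{FriMo} that $\calA(X)\times[0,1]$ is obtained by triangulating each prism $\Delta\times[0,1]$ over an $n$-simplex $\Delta=[v_0,\dots,v_n]$ into the $(n+1)$-simplices $P_i=[(v_0,0),\dots,(v_i,0),(v_i,1),\dots,(v_n,1)]$, $i=0,\dots,n$. Since $\calA(X)$ is aspherical, each of its simplices is uniquely determined by its $1$-skeleton; hence it suffices to prescribe $F$ on the $1$-skeleton of $\calA(X)\times[0,1]$ and then verify that every $P_i$ is sent to a genuine simplex of $\calA(X)$.

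First I would fix the values of $F$ on vertices and edges. On vertices, set $F(x,0)=x$ and $F(x,1)=\gamma_x(1)=\psi(g)(x)$. A horizontal edge in the bottom copy coming from an edge $e$ of $\calA(X)$ is sent to $e$, a horizontal edge in the top copy is sent to $\psi(g)(e)$, and a vertical edge $[(x,0),(x,1)]$ is sent to the class of $\gamma_x$ in $\calA^1(X)$. For a diagonal edge $[(v_a,0),(v_b,1)]$, occurring in the prism over a simplex with edge $e_{ab}$ from $v_a$ to $v_b$, I would assign the class $[e_{ab}*\gamma_{v_b}]$. Using the definition of $\psi(g)$ on edges one checks that $[e_{ab}*\gamma_{v_b}]=[\gamma_{v_a}*\psi(g)(e_{ab})]$, so reading the diagonal as ``across the bottom, then up'' agrees with reading it as ``up, then across the top'', and the prescription is internally consistent.

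To see that each $P_i$ is sent to an actual simplex of $\calA(X)$, I would realize this $1$-skeleton geometrically. Fix a singular $n$-simplex $\sigma$ of $\calK(X)$ lying over $\Delta$, and a deformation retraction $r$ of $\Delta^n\times[0,1]$ onto $(\Delta^n\times\{0\})\cup(V\times[0,1])$, where $V$ is the vertex set; set $H_\sigma=\hat\sigma\circ r$, where $\hat\sigma$ equals $\sigma$ on the bottom face and $\gamma_{v_j}$ on the vertical edge over $v_j$. Then $H_\sigma(\cdot,0)=\sigma$, and since $\Delta^n\times[0,1]$ is contractible, $H_\sigma$ sends any two edges with the same endpoints to homotopic paths; a direct check then shows that the top and diagonal edges of $H_\sigma$ carry exactly the homotopy classes prescribed above, and in particular that $H_\sigma(\cdot,1)$ has the same $1$-skeleton as $\psi(g)(\sigma)$. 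The restriction of $H_\sigma$ to each $P_i$ is an honest singular $(n+1)$-simplex in $X$, hence a simplex of $\calK(X)$, whose image under the projection $\pi\colon\calK(X)\to\calA(X)$ is the simplex of $\calA(X)$ with the prescribed $1$-skeleton. This both produces the required simplex and, by asphericity, shows that the value of $F$ on $P_i$ is forced.

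It remains to check that $F$ is genuinely simplicial and independent of the auxiliary data. Independence of the representative $\{\gamma_x\}$ and of the retraction $r$ is automatic, since every edge class above depends only on $g$ and on edges of $\calA(X)$, and simplices of $\calA(X)$ are pinned down by their $1$-skeleta; compatibility of $F$ with the face operators, and agreement of the prescriptions attached to a simplex and to its faces, follow from the naturality of the prism triangulation. The identities $F\circ i_0=\id$ and $F\circ i_1=\psi(g)$ are then read off from the bottom and top faces. I expect the genuine difficulty to be the bookkeeping of degenerate cases: when some $\gamma_x$ is nullhomotopic, or two vertices $\gamma_{v_a}(1),\gamma_{v_b}(1)$ collide, the naive image of a prism simplex acquires repeated vertices, and one must verify that the corresponding degenerate simplex of $\calA(X)\times[0,1]$ is sent to the appropriate lower-dimensional simplex, so that $F$ remains a well-defined simplicial map.
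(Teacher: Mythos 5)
Your construction breaks down---and not merely in a way that bookkeeping can repair---on exactly those elements of $\Pi(X,X)$ whose representing families contain non-nullhomotopic loops, and such elements cannot be excluded: they arise already in the subgroups $\Pi_X(U_i,V_i)$ that this paper feeds into the diffusion argument. Concretely, let $g$ be represented by $\{\gamma_x\}_{x\in X}$ where $\gamma_{x_0}$ is a loop at $x_0$ with $[\gamma_{x_0}]\neq 1$ in $\pi_1(X,x_0)$ and $\gamma_x$ is constant for $x\neq x_0$; this is a legitimate element of $\Pi(X,X)$ (its associated permutation is the identity). Your prescription sends the vertical edge $[(x_0,0),(x_0,1)]$ to ``the class of $\gamma_{x_0}$ in $\calA^1(X)$'', but no such edge exists: simplices of a multicomplex have pairwise distinct vertices, so $\calA^1(X)$ contains one edge for each homotopy class of paths joining two \emph{distinct} points, and no edge whose endpoints coincide. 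Since $F(x_0,0)=F(x_0,1)=x_0$, the only simplicial option is to collapse this vertical edge to the vertex $x_0$. (The same problem afflicts your diagonal prescription $[e_{ab}*\gamma_{v_b}]$, which is a loop class whenever $v_a=\gamma_{v_b}(1)$.)

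Once that edge collapses, the prism combinatorics become contradictory, so for this $g$ \emph{no} simplicial map $F\colon \calA(X)\times[0,1]\to\calA(X)$ with $F\circ i_0=\mathrm{id}$ and $F\circ i_1=\psi(g)$ exists at all; the statement is not provable by a one-step prism homotopy. Indeed, let $e$ be any edge of $\calA(X)$ from $x_0$ to a vertex $w\neq x_0$. The square over $e$ consists of the two triangles $[(x_0,0),(w,0),(w,1)]$ and $[(x_0,0),(x_0,1),(w,1)]$, which share the diagonal $[(x_0,0),(w,1)]$. In the first triangle the vertical edge over $w$ collapses, so the triangle, being mapped affinely, has one-dimensional image, and the diagonal must map onto the same edge as the bottom edge, namely $e$; in the second triangle the vertical edge over $x_0$ collapses, so the diagonal must map onto the same edge as the top edge, namely $\psi(g)(e)=[\gamma_{x_0}^{-1}*e]$. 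These requirements are incompatible, since $[\gamma_{x_0}^{-1}*e]=[e]$ would force $[\gamma_{x_0}]=1$, and distinct homotopy classes give distinct edges of $\calA(X)$. This is precisely why the proof of this result in~\cite{FriMo} (which the present paper only cites, without reproducing) cannot be a direct prism construction: one must first reduce to elements all of whose paths have distinct endpoints (for instance by factoring $g$ in $\Pi(X,X)$ so that each loop is routed through an auxiliary vertex), build a prism-type homotopy for each factor---where your construction is essentially the right idea---and then concatenate the resulting simplicial homotopies, which requires the completeness of $\calA(X)$, since simplicial homotopy is transitive and compatible with composition only for complete targets. The loop case is therefore not a degenerate nuisance, as your closing paragraph suggests, but the central difficulty of the statement.
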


 \section{Diffusion of chains}
Diffusion of chains was first introduced by Gromov in~\cite{Grom82}. 
In presence of suitable actions by amenable groups, diffusion allows to
decrease the $\ell^1$-norm of cycles without altering their homology class.

Let $\G$ be a group acting on a set $\Lambda$. 
For every function $f\colon \Lambda\to \R$ we denote by $\supp(f)$ the support of $f$, i.e.~the set
$$
\supp(f)=\{x\in \Lambda\, |\, f(x)\neq 0\}\ ,
$$
and
we denote by $\ell_0(\Lambda)$ the set of real functions on $\Lambda$ with finite support. For every $f\in \ell_0(\Lambda)$ we set
$$
\|f\|_1=\sum_{x\in \Lambda} |f(x)|\ .
$$

We also denote by $\calM_0(\G)$ the space of probability measures on $\G$ with finite support.
We will
often consider an element $\mu\in\calM_0(\G)$ as a non-negative function $\mu\in \ell_0(\G)$ such that $\sum_{g\in\G} \mu(g)=1$.

\begin{defn}
The \emph{diffusion operator} associated to a measure $\mu\in\calM_0(\G)$ is the $\R$-linear map
\begin{displaymath}
\mu \ast \colon \ell_{0}(\Lambda) \rightarrow \ell_{0}(\Lambda)\ ,\qquad
 (\mu \ast f) (x) = \sum_{g \in \, \G} \mu(g) f(g^{-1} x)\ .
\end{displaymath}
\end{defn}
It is readily seen that $\|\mu * f\|_1\leq \|f\|_1$ for every $f\in \ell_0(\Lambda)$, $\mu\in \calM_0(\G)$. Moreover,
the composition of diffusion operators is itself a diffusion operator, i.e.~for every $\mu_1,\mu_2\in \calM_0(\G)$ there exists 
$\mu\in \calM_0(\G)$ such that
$$
\mu_1*(\mu_2* f)=\mu * f
$$
(simply set $\mu(g)=\sum_{h\in \G} \mu_1(h)\mu_2(h^{-1}g)$). 

The following result provides the fundamental step in the proof that, under suitable conditions, diffusion decreases the $\ell^1$-norm of chains.
 
\begin{lemma}[{\cite[Corollary 8.1.6]{FriMo}}]\label{oss-scelta-epsilon}
Let $\G$ be an amenable group acting transitively on a set $\Lambda$, and take an element
  $f\in \ell_0(\Lambda)$.
Then for every $\eta>0$ there exists a probability measure $\mu\in\calM_0(\G)$ such that 
\begin{displaymath}
\lVert \mu \ast f \rVert_1 \leq  \Big| \sum_{x \in \Lambda} f(x) \Big| + \eta\ .
\end{displaymath}
\end{lemma}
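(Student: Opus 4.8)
The plan is to exploit the single algebraic feature that the diffusion operator shares with any averaging process: it preserves the total mass. Writing $S=\sum_{x\in\Lambda}f(x)$, for any $\mu\in\calM_0(\G)$ one computes $\sum_{x}(\mu\ast f)(x)=\sum_{g}\mu(g)\sum_x f(g^{-1}x)=S$, whence $\lVert\mu\ast f\rVert_1\geq |S|$ for every $\mu$. Thus $|S|$ is an unavoidable floor, and the content of the statement is that diffusion can be made to essentially attain it. The strategy is therefore to split off this unavoidable part and to diffuse away the remainder, which has zero total mass. Concretely, assuming $f\neq 0$ (otherwise the claim is trivial), fix a point $x_0\in\Lambda$ and write $f=(f-S\delta_{x_0})+S\delta_{x_0}$, where $\delta_x\in\ell_0(\Lambda)$ denotes the indicator of $x$. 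Since $\mu\ast\delta_{x_0}=\sum_g\mu(g)\delta_{gx_0}$ is a nonnegative function of total mass $1$, we have $\lVert\mu\ast\delta_{x_0}\rVert_1=1$, so it suffices to find $\mu$ with $\lVert\mu\ast(f-S\delta_{x_0})\rVert_1<\eta$; indeed then $\lVert\mu\ast f\rVert_1\leq\lVert\mu\ast(f-S\delta_{x_0})\rVert_1+|S|\leq |S|+\eta$.

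Next I would decompose the zero-mass remainder into elementary differences. Writing $\supp(f)=\{x_1,\dots,x_m\}$ and $a_i=f(x_i)$, we obtain $f-S\delta_{x_0}=\sum_{i=1}^m a_i(\delta_{x_i}-\delta_{x_0})$. Here transitivity enters: for each $i$ choose $s_i\in\G$ with $s_i x_0=x_i$, so that $\delta_{x_i}=s_i\cdot\delta_{x_0}$ for the natural action $(g\cdot\phi)(x)=\phi(g^{-1}x)$ on $\ell_0(\Lambda)$, under which $\mu\ast\phi=\sum_g\mu(g)\,g\cdot\phi$. A direct computation then gives $\mu\ast(s_i\cdot\delta_{x_0}-\delta_{x_0})=(R_{s_i}\mu-\mu)\ast\delta_{x_0}$, where $R_{s_i}\mu(g)=\mu(g s_i^{-1})$ is the right translate of $\mu$; consequently $\lVert\mu\ast(\delta_{x_i}-\delta_{x_0})\rVert_1\leq\lVert R_{s_i}\mu-\mu\rVert_1$.

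Finally, amenability of $\G$ supplies a single good measure. By Reiter's property (valid since amenable groups admit two-sided F{\o}lner sequences), for the finite set $\{s_1,\dots,s_m\}$ and any $\epsilon>0$ there is $\mu\in\calM_0(\G)$ with $\lVert R_{s_i}\mu-\mu\rVert_1<\epsilon$ for all $i$ simultaneously. Combining the estimates by the triangle inequality, $\lVert\mu\ast(f-S\delta_{x_0})\rVert_1\leq\sum_i|a_i|\,\lVert R_{s_i}\mu-\mu\rVert_1<\lVert f\rVert_1\,\epsilon$, and choosing $\epsilon=\eta/\lVert f\rVert_1$ yields $\lVert\mu\ast f\rVert_1\leq|S|+\eta$, as required.

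I expect the genuine difficulty to be precisely the word ``simultaneously'' in the last step. For a single difference $\delta_{x_i}-\delta_{x_0}$ one does not even need the full group: the Ces{\`a}ro average $\tfrac1N\sum_{k=0}^{N-1}\delta_{s_i^k}$ telescopes to $\tfrac1N(\delta_{s_i^N x_0}-\delta_{x_0})$, of norm at most $2/N$. The obstacle is that these cyclic averages, attached to possibly non-commuting elements $s_i$, cannot simply be composed to handle all support points at once, since diffusion operators correspond to convolution on $\G$ and need not commute. Overcoming this is exactly where one must invoke amenability of $\G$ itself, rather than merely of its cyclic subgroups, and where some care with the left/right bookkeeping of the translate $R_{s_i}\mu$ is needed.
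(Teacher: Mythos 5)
Your proof is correct: the mass-preservation floor $\|\mu\ast f\|_1\geq |S|$, the splitting $f=(f-S\delta_{x_0})+S\delta_{x_0}$, the identity $\mu\ast(\delta_{x_i}-\delta_{x_0})=(R_{s_i}\mu-\mu)\ast\delta_{x_0}$ with the left/right bookkeeping done properly, and the simultaneous Reiter/F{\o}lner estimate (realizable by finitely supported measures, e.g.\ normalized indicators of right F{\o}lner sets, so that $\mu\in\calM_0(\G)$ as required) are all sound. The paper itself gives no proof of this lemma --- it quotes it from [FriMo, Corollary 8.1.6] --- and your argument is exactly the standard Reiter-property proof underlying that reference, so it matches the intended approach.
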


We will apply the diffusion operator in the case when $\G$ is a subgroup of $\aut(K)$ 
for some multicomplex $K$, 
and $\Lambda$ is the set of algebraic $n$-simplices of $K$. 
More precisely,
let $\Theta(n)$ denote  the set of all $n$-algebraic simplices of $K$, and recall  
 that the simplicial chain module $C_n(K)$ is the free real vector space with basis $\Theta(n)$.
 There exists a natural isometric identification between the space $\ell_0(\Theta(n))$ and the chain module $C_n(K)$ (both endowed with their $\ell^1$-norms),
which
 identifies an element $f\in \ell_0(\Theta(n))$ with the simplicial chain $\sum_{\sigma \in \Theta(n)} f(\sigma)\cdot \sigma$. Therefore, if $\mu\in\calM_0(\G)$, 
 then the operator $\mu*\colon \ell_0(\Theta(n))\to\ell_0(\Theta(n))$ defines a diffusion operator on chains
 $$
 \mu*\colon C_n(K)\to C_n(K)\ .
 $$
An easy computation shows that, if $c = \sum_{\sigma\in\Theta(n)} a_\sigma\cdot \sigma$, then
$$\mu \ast c = 
\sum_{\gamma\in\G} \mu(\gamma)(\gamma\cdot c)\ .$$
 
 \begin{lemma}\label{homotopy:lemma}
 Suppose that every element of $\G$ is simplicially homotopic to the identity of $K$.
 There exists a constant $K_n$ only depending on $n$ such that the following condition holds.
 Let $c\in C_n(K)$ be a simplicial cycle, and take a probability measure $\mu\in\calM_0(\G)$. Then there exists
 $b\in C_{n+1}(K)$ such that 
 $$
 (\mu*c)-c=\partial b\ , \qquad \|b\|_1\leq K_n \|c\|_1\ .
 $$
  \end{lemma}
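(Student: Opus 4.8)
The plan is to reduce the statement to the case of a single group element and then invoke the classical prism (chain homotopy) operator attached to a simplicial homotopy. Recall from the computation preceding the statement that $\mu*c=\sum_{\gamma\in\G}\mu(\gamma)(\gamma\cdot c)$ and that $\sum_{\gamma\in\G}\mu(\gamma)=1$, so that
$$
(\mu*c)-c=\sum_{\gamma\in\G}\mu(\gamma)\,(\gamma\cdot c-c)\ .
$$
Since $\mu$ has finite support this is a finite sum, and it therefore suffices to construct, for each $\gamma$ in the support of $\mu$, a chain $b_\gamma\in C_{n+1}(K)$ with $\partial b_\gamma=\gamma\cdot c-c$ and $\|b_\gamma\|_1\leq (n+1)\|c\|_1$; one then sets $b=\sum_{\gamma}\mu(\gamma)b_\gamma$, which lies in $C_{n+1}(K)$ (not merely in its completion) because the sum is finite, satisfies $\partial b=(\mu*c)-c$, and obeys $\|b\|_1\leq\sum_\gamma\mu(\gamma)\|b_\gamma\|_1\leq (n+1)\|c\|_1$. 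This is where the constant $K_n=n+1$ will come from.

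So fix $\gamma$ in the support of $\mu$. By hypothesis $\gamma$ is simplicially homotopic to the identity, hence there is a simplicial map $F\colon K\times[0,1]\to K$ with $F\circ i_0=\id$ and $F\circ i_1=\gamma$, where $K\times[0,1]$ is the multicomplex of~\cite[Definitions 3.3.2 and 3.3.3]{FriMo}. Writing $W(\sigma)\in C_{n+1}(K\times[0,1])$ for the standard triangulation of the prism over an algebraic $n$-simplex $\sigma$ — the alternating sum of the $n+1$ algebraic $(n+1)$-simplices triangulating $\sigma\times[0,1]$ — I would set $P=F_\#\circ W\colon C_n(K)\to C_{n+1}(K)$, where $F_\#$ denotes the chain map induced by $F$ on algebraic chains. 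The usual verbatim computation then yields the chain homotopy identity $\partial\circ P+P\circ\partial=\gamma_\#-\id$, where $\gamma_\#(c)=\gamma\cdot c$. Since $c$ is a cycle we have $P(\partial c)=0$, whence $\partial P(c)=\gamma\cdot c-c$, and I would take $b_\gamma=P(c)$.

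The norm estimate is then immediate and, crucially, uniform in $\gamma$. Because $F$ is simplicial, $F_\#$ sends each algebraic simplex to a single (possibly degenerate) algebraic simplex, so it is norm non-increasing; here it is essential that the tuple of vertices of an algebraic simplex is allowed to repeat, so that degenerate images of prism simplices are legitimate elements of $C_{n+1}(K)$ rather than being discarded. Since $W(\sigma)$ is a sum of $n+1$ algebraic simplices, $\|W(\sigma)\|_1\leq n+1$ for every $\sigma$, and therefore
$$
\|b_\gamma\|_1=\|F_\#(W(c))\|_1\leq\|W(c)\|_1\leq (n+1)\|c\|_1\ ,
$$
a bound that does not depend on $\gamma$ nor on the chosen homotopy $F$. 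This is exactly the uniform estimate required by the averaging step, and it closes the argument with $K_n=n+1$.

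The step I expect to demand the most care is the purely structural one of installing the prism operator inside the multicomplex formalism: checking that the standard triangulation of $\sigma\times[0,1]$ genuinely consists of simplices of $K\times[0,1]$ as defined in~\cite{FriMo}, that $F_\#$ is a well-defined norm non-increasing chain map on \emph{algebraic} chains (this is precisely what the allowance of repeated vertices is for), and that the classical prism identity carries over without change. Once these facts are in place, neither the homology computation nor the norm bound presents any difficulty, and the key feature — that the number $n+1$ of prism simplices is independent of the group element — is exactly what makes the averaging over $\mu$ work.
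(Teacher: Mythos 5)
Your proposal is correct and follows essentially the same strategy as the paper's proof: both write $(\mu*c)-c=\sum_{\gamma}\mu(\gamma)(\gamma\cdot c-c)$ over the finite support of $\mu$ and average chains $b_\gamma$ satisfying $\partial b_\gamma=\gamma\cdot c-c$ with a $\gamma$-independent norm bound. The only difference is that where the paper simply invokes \cite[Remark 3.3.5]{FriMo} for the existence of such $b_\gamma$ with $\|b_\gamma\|_1\leq K_n\|c\|_1$, you unpack that remark by building $b_\gamma$ explicitly from the prism operator associated to a simplicial homotopy (correctly noting that degenerate images are legitimate algebraic simplices), which in addition produces the explicit constant $K_n=n+1$.
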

 \begin{proof}
 Since elements of $\G$ are simplicially homotopic to the identity, there exists a constant $K_n$ only depending on $n$ such that, for every $\gamma\in\G$, 
 $$\gamma\cdot c -c =b_\gamma$$
for some $b_\gamma\in C_{n+1}(K)$ such that $\|b_\gamma\|_1\leq K_n\|c\|_1$ (see e.g.~\cite[Remark 3.3.5]{FriMo}). Therefore, if $S=\supp(\mu)$ and $$b=\sum_{\gamma \in S} \mu(\gamma)b_\gamma\in C_{n+1}(K)\ ,$$ then
\begin{align*}
(\mu *c)-c=\left(\sum_{\gamma \in S} \mu(\gamma)\, \gamma\cdot c\right)- c=\sum_{\gamma \in S} \mu(\gamma)\left( \gamma\cdot c- c\right)=
\sum_{\gamma \in S}\mu(\gamma) b_\gamma=b\ ,
\end{align*} and
$$
\|b\|_1=\bigg\| \sum_{\gamma\in S} \mu(\gamma)b_\gamma\bigg\|_1\leq \sum_{\gamma\in S} \mu(\gamma)\|b_\gamma\|_1\leq
\sum_{\gamma\in S} \mu(\gamma)K_n=K_n\ .$$
 \end{proof}

\section{Proof of Theorem~\ref{main:thm}}
We are now ready to go into the proof of our main result. 
We first prove that it is not restrictive to restrict our attention to triangulable spaces.

\begin{lemma}\label{reduction:lemma}
 Suppose that Theorem~\ref{main:thm} holds whenever $X$ is (the topological realization of) a simplicial complex. Then Theorem~\ref{main:thm} holds for every topological space $X$.
\end{lemma}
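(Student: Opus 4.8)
The plan is to reduce the general statement to the simplicial case by replacing $X$ with a simplicial approximation and transporting everything along a suitable map. Concretely, I would first construct a simplicial complex $Y$ together with a continuous map $f\colon Y\to X$ that is a weak homotopy equivalence. By Whitehead's CW-approximation theorem there is a weak homotopy equivalence $g\colon Z\to X$ with $Z$ a CW complex, and since every CW complex has the homotopy type of a simplicial complex there is a homotopy equivalence $h\colon Y\to Z$ with $Y$ a simplicial complex; the composition $f=g\circ h$ is then the desired weak homotopy equivalence. In particular $f$ is continuous, the induced map $f_*\colon\pi_1(Y,y_0)\to\pi_1(X,f(y_0))$ is an isomorphism for every $y_0\in Y$, and $H_n(f)$ is an isomorphism in every degree.

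Next I would pull back the amenable cover. Writing $\calU=\{U_i\}_{i\in I}$ for the given amenable open cover of $X$ of multiplicity $k$, I set $\mathcal{V}=\{f^{-1}(U_i)\}_{i\in I}$. Continuity of $f$ makes each $f^{-1}(U_i)$ open, and since $f$ maps any point of $\bigcap_{i\in J}f^{-1}(U_i)$ into $\bigcap_{i\in J}U_i$, the multiplicity of $\mathcal{V}$ is at most $k$. To check that $f^{-1}(U_i)$ is amenable in $Y$, I fix $y_0\in f^{-1}(U_i)$ and consider the commutative square of fundamental groups determined by the inclusions $f^{-1}(U_i)\hookrightarrow Y$ and $U_i\hookrightarrow X$ together with $f$ and its restriction $f^{-1}(U_i)\to U_i$. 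Applying the isomorphism $f_*$ identifies the image of $\pi_1(f^{-1}(U_i),y_0)$ in $\pi_1(Y,y_0)$ with a subgroup of the image of $\pi_1(U_i,f(y_0))$ in $\pi_1(X,f(y_0))$; the latter image is amenable because $U_i$ is amenable in $X$, and subgroups of amenable groups are amenable. Hence $\mathcal{V}$ is an amenable open cover of $Y$ of multiplicity at most $k$.

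Finally I would transfer $\ell^1$-invisibility back to $X$. Since the multiplicity of $\mathcal{V}$ is at most $k$, the assumed validity of Theorem~\ref{main:thm} for the simplicial complex $Y$ ensures that every class in $H_n(Y;\R)$ is $\ell^1$-invisible for all $n\geq k$. Given $\alpha\in H_n(X;\R)$ with $n\geq k$, surjectivity of $H_n(f)$ yields $\beta\in H_n(Y;\R)$ with $H_n(f)(\beta)=\alpha$, whence $\iota_n(\beta)=0$; by naturality of $\iota_*$ with respect to $f$ one obtains $\iota_n(\alpha)=\iota_n(H_n(f)(\beta))=H_n^{\ell^1}(f)(\iota_n(\beta))=0$, so $\alpha$ is $\ell^1$-invisible. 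I expect the step demanding the most care to be the amenability of the pulled-back cover: it relies on the functoriality of $\pi_1$ through the commutative square and on $f_*$ being an isomorphism, which is precisely why a weak homotopy equivalence, rather than an arbitrary continuous map, is needed. The reduction through CW- and then simplicial approximation is standard, and the final transfer is a formal consequence of naturality, the isomorphism $H_n(f)$, and the isometric isomorphism $H_n^{\ell^1}(f)$ guaranteed by the $\pi_1$-isomorphism.
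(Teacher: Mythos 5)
Your proof is correct and follows essentially the same route as the paper: produce a weak homotopy equivalence $f\colon Y\to X$ with $Y$ (the realization of) a simplicial complex, pull back the open cover along $f$, verify amenability of the pulled-back sets via the commutative square of fundamental groups and the fact that subgroups of amenable groups are amenable, and then transfer $\ell^1$-invisibility back to $X$ by naturality of $\iota_*$ together with surjectivity of $H_n(f)$. The only real difference is the source of the simplicial approximation: the paper uses the explicit, functorial model $j\colon |\calS''(X)|\to X$, obtained by subdividing the singular simplicial set of $X$ twice, whereas you invoke Whitehead's CW-approximation theorem followed by the fact that every CW complex is homotopy equivalent to a simplicial complex. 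Both are standard and equally valid; the paper's choice is merely more concrete (and its $j$ is surjective, so the multiplicity of the pulled-back cover equals $\mult(\calU)$ exactly, while your $f$ need not be surjective and your cover may have multiplicity strictly smaller than $k$ --- which, as you implicitly use, is harmless, since a smaller multiplicity only strengthens the applicability of the assumed theorem in degrees $n\geq k$). One minor streamlining: in the final transfer you only need that the square relating $H_n(f)$, $H_n^{\ell^1}(f)$, and the two maps $\iota_n$ commutes and that $H_n(f)$ is surjective; the appeal to $H_n^{\ell^1}(f)$ being an isometric isomorphism is superfluous, and indeed the paper's version of this step uses only the commutative diagram and the isomorphism $H_n(j)$.
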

\begin{proof}
Let $X$ be any topological space. The singular simplicial set $\calS(X)$ associated to $X$ 
is weakly homotopy equivalent to $X$ via a natural projection $j\colon |\calS(X)|\to X$ (see e.g.~\cite[Theorem 4.5.30]{Piccinini}). By subdividing $\calS(X)$ twice, one gets a simplicial complex
$\calS''(X)$ whose geometric realization $|\calS''(X)|$ is canonically homeomorphic to $|\calS(X)|$. Thus we have  a weak homotopy equivalence 
$$
j\colon |\calS''(X)|\to X\ ,
$$
which we still denote by $j$ with a slight abuse.

It is now sufficient to prove the following claims: 
\begin{enumerate}
 \item 
if $X$ admits an amenable cover of multiplicity $k$, then also $|\calS''(X)|$ admits an amenable cover
of multiplicity $k$;  
\item
if the map $\iota_n^\calS\colon H_n(|\calS''(X)|)\to H_n^{\ell^1}(|\calS''(X)|)$ is null,
then also the map 
$\iota_n^X\colon H_n(X)\to H_n^{\ell^1}(X)$ is null.
\end{enumerate}

In order to prove (1), observe that, if $\calU$ is an open cover of $X$, then $j^{-1}\calU=\{j^{-1}(U),\, U\in\calU\}$ is an open cover of $|\calS''(X)|$ such that $\mult(j^{-1}\calU)=\mult(\calU)$.
 Moreover,
 the map $j\colon |\calS''(X)|\to X$ induces an isomorphism on fundamental groups, so 
 from the commutative diagram
 $$
 \xymatrix{
 j^{-1}(U)\ar@{^{(}->}[r]\ar[d]_{j|_{j^{-1}(U)}}& |\calS''(X)|\ar[d]^{j}\\
 U\ar@{^{(}->}[r] & X
 }
 $$
 we deduce that the image of $\pi_1(j^{-1}(U),x_0)$ in $\pi_1(|\calS''(X)|,x_0)$ is isomorphic to a subgroup of the image of $\pi_1(U,j(x_0))$ in $\pi_1(X,j(x_0))$. Since every subgroup of an amenable group
 is amenable, this shows that 
 the cover $j^{-1}\calU$ is amenable if $\calU$ is so. This proves (1). 

In order to prove (2), let us consider the following commutative 
diagram:
 $$
\xymatrix{H_n(|\calS''(X)|)\ar[d]_{H_n(j)} \ar[r]^-{\iota_n^\calS} & H_n^{\ell^1}(|\calS''(X)|)\ar[d]^{H_n^{\ell^1}(j)}\\ H_n(X)\ar[r]^-{\iota^X_n} & H_n^{\ell^1}(X) \ .}
 $$
Since $j$ is a weak homotopy equivalence, the map $H_n(j)$ is an isomorphism (see e.g.~\cite[Proposition~4.21]{hatcher}). 
Thus, if $\iota_n^\calS$ is null, then also $\iota_n^X$ is null, whence the conclusion.
\end{proof} 

Thanks to the previous lemma, henceforth we assume that $X$
 is a topological space admitting a triangulation $T$, i.e.~that $X$ is equal to the geometric realization $|T|$ of a simplicial complex $T$.
Let $\calU=\{U_i\}_{i\in I}$ be an open amenable cover of $X$, and let $k=\mult(U)$.

For every vertex $v$ of $T$, the closed star of $v$ in $T$ is defined as the subcomplex of $T$ containing all the simplices containing $v$ (and all their faces). 
By suitably subdividing $T$ we may suppose that the following condition holds (see for instance \cite[Theorem~16.4]{munkres}): for every vertex $v$ of $T$ there exists $i(v)\in I$ such that the closed star of $v$ in $T$ 
is contained in the 
element $U_{i(v)}$ of the cover (of course, the choice of $i(v)$ may be non-unique).

For every $i\in I$ we set 
$$
V_i=\{v\in V(T)\, |\, i(v)=i\}\ .
$$
By construction we have $V_i\subseteq U_i$. Let us now set 
$$
\G=\bigoplus_{i\in I} \Pi_X(U_i,V_i)\ .
$$
The direct sum of amenable groups is amenable, so Lemma~\ref{prop-psi-pi-u-v-amenable} implies that $\G$ is amenable. Also observe that, if $i\neq j$, then $V_i\neq V_j$, so elements
in $\Pi_X(U_i,V_i)$ commute with elements in $\Pi_X(U_j,V_j)$. As a consequence, the group $\G$ naturally sits in $\Pi(X,X)$ as a subgroup, and acts on $\calA(X)$.

Observe that there is a canonical copy $\calA_T(X)$ of $T$ inside the multicomplex $\calA(X)$. 
In fact, the multicomplex $\calK(X)$ contains a submulticomplex 
 $\calK_T(X)\cong T$ whose simplices
are the equivalence classes of the affine parametrizations of simplices of $T$. Since $T$ is a simplicial complex, every simplex in $\calK_T(X)$
is uniquely determined by its $0$-skeleton. 
As a consequence, the projection $\pi\colon \calK(X)\to\calA(X)$
restricts to an injective map on $\calK_T(X)$. We may thus set $\calA_T(X)=\pi(\calK_T(X))$.

Let us now fix $n\geq k=\mult(\calU)$, and let $\alpha$ be an element in $H_n(X)$. 
Recall that there exists a canonical isomorphism  $H_n(\phi_*)\colon H_n(T)\to H_n(|T|)=H_n(X)$
between the simplicial homology of $T$  the singular
homology of its geometric realization (see Proposition~\ref{simpl:iso}). We denote by $\alpha_T\in H_n(T)$ the class such that $H_n(\phi_*)(\alpha_T)=\alpha$, 
by $\alpha_\calK\in H_n(\calK(X))$ the image of $\alpha_T$ under the map induced by the inclusion
$T\cong \calK_T(X)\hookrightarrow \calK(X)$, and we set
$\alpha_\calA=H_n(\pi)(\alpha_\calK)\in H_n(\calA(X))$. It readily follows from the definitions that
\begin{equation}\label{aaaeee}
\alpha=H_n(S_X)(H_n(\varphi_*)(\alpha_\calK))\ ,
\end{equation}
where $S_X\colon |\calK(X)|\to X$ is the natural projection.

We denote by 
$$
\iota_n^X\colon H_n(X)\to H_n^{\ell^1}(X)\, ,\qquad \iota_n^\calA\colon H_n(\calA(X))\to H_n^{\ell^1}(\calA(X))
$$
the maps induced by the inclusion of ordinary (singular or simplicial) chains into $\ell^1$-chains.

\begin{prop}\label{bastaA}
 Suppose that $\iota_n^\calA(\alpha_\calA)=0$ in $H_n^{\ell^1}(\calA(X))$. Then $\iota_n^X(\alpha)=0$ in $H_n^{\ell^1}(X)$.
\end{prop}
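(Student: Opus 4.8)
The plan is to transport the $\ell^1$-nullity of $\alpha_\calA$ in $\calA(X)$ back to the singular setting on $X$, using the various comparison maps set up in the preliminaries. The key observation is that all the maps relating the different homology theories are compatible with the inclusions of ordinary chains into $\ell^1$-chains, so that $\iota_n$ is natural with respect to them. Concretely, I would assemble the chain of maps
$$
H_n(\calK(X))\xrightarrow{H_n(\pi)} H_n(\calA(X))\, ,\qquad H_n(\calK(X))\xrightarrow{H_n(\phi_*)} H_n(|\calK(X)|)\xrightarrow{H_n(S_X)} H_n(X)\, ,
$$
together with their $\ell^1$-analogues, and argue that the square relating $\iota_n$ at each stage commutes.

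First I would record the naturality square for each relevant map $f$: since $f$ is induced by a chain map that extends continuously to $\ell^1$-chains, one has $\iota_n\circ H_n(f)=H_n^{\ell^1}(f)\circ\iota_n$. Applying this to the projection $\pi\colon\calK(X)\to\calA(X)$ and recalling $\alpha_\calA=H_n(\pi)(\alpha_\calK)$, the hypothesis $\iota_n^\calA(\alpha_\calA)=0$ gives $H_n^{\ell^1}(\pi)(\iota_n^\calK(\alpha_\calK))=0$. The crucial point is that $\pi$ induces an \emph{isomorphism on fundamental groups}, so by the result quoted in the preliminaries (the map $H_n^{\ell^1}(f)$ is an isometric isomorphism when $f$ is a $\pi_1$-isomorphism) the map $H_n^{\ell^1}(\pi)$ is injective. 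Hence $\iota_n^\calK(\alpha_\calK)=0$ in $H_n^{\ell^1}(\calK(X))$.

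Next I would push this vanishing forward to $X$ along $S_X$ and $\phi_*$. Using equation~\eqref{aaaeee}, we have $\alpha=H_n(S_X)(H_n(\phi_*)(\alpha_\calK))$, and the same naturality of $\iota_n$ with respect to $\phi_*$ and $S_X$ yields
$$
\iota_n^X(\alpha)=H_n^{\ell^1}(S_X)\bigl(H_n^{\ell^1}(\phi_*^{\ell^1})(\iota_n^\calK(\alpha_\calK))\bigr)=0\, ,
$$
since $\iota_n^\calK(\alpha_\calK)=0$. This is exactly the desired conclusion.

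The main obstacle I anticipate is verifying the injectivity of $H_n^{\ell^1}(\pi)$ carefully: one must check that $\pi$ genuinely induces an isomorphism on $\pi_1$ (this is asserted in the construction of $\calA(X)$) so that the isometric isomorphism statement for $\ell^1$-homology applies on the level of the relevant spaces $|\calK(X)|$ and $|\calA(X)|$, rather than just the multicomplexes; this requires knowing that $H_n^{\ell^1}$ of a multicomplex agrees with $H_n^{\ell^1}$ of its realization, or that the cited $\pi_1$-invariance statement has been set up at the multicomplex level. The remaining steps are formal naturality verifications and should be routine.
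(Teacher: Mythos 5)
Your overall strategy coincides with the paper's: both arguments run the same two naturality squares and reduce everything to showing that $H_n^{\ell^1}(\pi)$ is injective, and the second half of your argument (pushing $\iota_n^\calK(\alpha_\calK)=0$ forward along $\phi_*$ and $S_X$ via naturality and equation~\eqref{aaaeee}) is exactly the paper's and is correct. The gap is in the step you yourself flag as the ``main obstacle'', and it is not a routine verification: it is precisely the point where the paper does something different. The $\pi_1$-invariance result quoted in the preliminaries concerns \emph{continuous maps between topological spaces} and \emph{singular} $\ell^1$-homology, whereas $\pi\colon\calK(X)\to\calA(X)$ is a simplicial map and $H_n^{\ell^1}(\pi)$ is a map between the \emph{simplicial} $\ell^1$-homology modules of multicomplexes. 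To transfer the cited statement to this setting you would need an $\ell^1$-analogue of Proposition~\ref{simpl:iso}, i.e.\ that $H_*(\phi_*^{\ell^1})\colon H_*^{\ell^1}(K)\to H_*^{\ell^1}(|K|)$ is an isomorphism (or at least injective); the paper never establishes this -- Proposition~\ref{simpl:iso} is stated only for ordinary homology -- and no such comparison between the completed simplicial chain complex and singular $\ell^1$-chains is available off the shelf.

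The paper circumvents this entirely by staying at the multicomplex level: it quotes~\cite[Corollary 4.4.4]{FriMo}, which asserts that $H^n_b(\pi)\colon H^n_b(\calA(X))\to H^n_b(\calK(X))$ is an isometric isomorphism of \emph{simplicial} bounded cohomology, and then applies L\"oh's Translation Principle~\cite[Theorem 1.1]{Loeh}. The Translation Principle is a purely algebraic duality statement for Banach chain complexes: since $C^*_b(K)$ is the topological dual of $C_*^{\ell^1}(K)$, an isomorphism on bounded cohomology forces an isomorphism on $\ell^1$-homology. This requires no comparison between simplicial and singular $\ell^1$-homology and no $\pi_1$-invariance theorem for multicomplexes. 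To repair your proof, replace the appeal to $\pi_1$-invariance by this bounded-cohomology/duality argument; as written, the key injectivity claim rests on a statement that is not available in the paper's framework.
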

\begin{proof}
Let us consider the following commutative diagram:
$$
\xymatrix{
H_n(\calK(X))\ar[rr]^{H_n(\pi)}\ar[d]^{\iota_n^\calK} & & H_n(\calA(X))\ar[d]^{\iota_n^\calA}\\
H_n^{\ell^1}(\calK(X))\ar[rr]^{H_n^{\ell^1}(\pi)}& & H_n^{\ell^1}(\calA(X))\ .
} 
$$
Since $H_n(\pi)(\alpha_\calK)=\alpha_\calA$ and 
$\iota_n^\calA(\alpha_\calA)=0$, we have 
$H_n^{\ell^1}(\pi)(\iota_n^\calK (\alpha_\calK))=0$.
By~\cite[Corollary 4.4.4]{FriMo}, the map $H^n_b(\pi)\colon H^n_b(\calA(X))\to H^n_b(\calK(X))$ is an isometric isomorphism,
hence  L\"oh's Translation Principle~\cite[Theorem 1.1]{Loeh} implies that also the map $H_n^{\ell^1}(\pi)$ is an isomorphism.
We thus have $\iota_n^\calK(\alpha_\calK)=0$.

Let us now consider the following commutative diagram:
$$
\xymatrix{
H_n(\calK(X))\ar[d]^{\iota_n^\calK}\ar[rr]^{H_n(\phi_*)} & & H_n(|\calK(X)|)\ar[d]\ar[rr]^{H_n(S_X)} & & H_n(X)\ar[d]^{\iota_n^X}\\
H_n^{\ell^1}(\calK(X))\ar[rr]^{H_n(\phi_*^{\ell^1})} & & H_n^{\ell^1}(|\calK(X)|)\ar[rr]^{H_n^{\ell^1}(S_X)} & & H^{\ell^1}_n(X)\ .
}
$$
Putting together equation~\eqref{aaaeee} and the fact that $\iota_n^\calK(\alpha_\calK)=0$ we have
$$
\iota^X_n(\alpha)=\iota_n^X(H_n(S_X)(H_n(\varphi_*)(\alpha_\calK)))=H_n^{\ell^1}(S_X)(H_n(\varphi_*^{\ell^1})(\iota^\calK_n(\alpha_\calK)))=0\ .
$$
\end{proof}

\subsection*{The submulticomplex $\calA'_T(X)$} 
We are  interested in diffusing chains supported on the submulticomplex $\calA_T(X)\subseteq \calA(X)$. 
However, $\calA_T(X)$ is \emph{not} left invariant by the action of $\G$ on $\calA(X)$, hence we define the submulticomplex
$$
\calA_T'(X)=\Gamma\cdot \calA_T(X)\ \subseteq \ \calA(X)\ .
$$
By construction, $\calA_T'(X)$ contains $\calA_T(X)$ and is $\Gamma$-invariant.
Recall that the $0$-skeleton of $\calA(X)$ coincides with the $0$-skeleton of $\calK(X)$, which is canonically identified with
$X$ itself. In particular, it makes sense to say that a vertex of a simplex of $\calA_T(X)$ (and of $\calA'_T(X)$) belongs to $V_i$ for some $i\in I$. 

\begin{lemma}\label{orientation-lemma}
 Let $\Delta$ be an $n$-dimensional simplex of $\calA_T'(X)$, where $n\geq k=\mult(\calU)$. Then there exists
 $g\in\Gamma$ such that $g(\Delta)=\Delta$, and the restriction of $g$ to $\Delta$ is orientation-reversing
 (i.e.~it induces an odd permutation of the vertices of $\Delta$).
\end{lemma}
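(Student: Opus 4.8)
The plan is to reduce to the case of a simplex already lying in $\calA_T(X)$ and then to exhibit an explicit ``swap'' element of $\G$. Since $\calA_T'(X)=\G\cdot\calA_T(X)$, I can write $\Delta=g_0\cdot\Delta_0$ with $g_0\in\G$ and $\Delta_0$ a simplex of $\calA_T(X)$. If $h\in\G$ fixes $\Delta_0$ setwise and induces an odd permutation $\tau$ of its vertices, then $g=g_0 h g_0^{-1}\in\G$ fixes $\Delta=g_0\Delta_0$, and since $g\cdot(g_0 w)=g_0(h w)$ for every vertex $w$ of $\Delta_0$, the element $g$ induces the same permutation $\tau$ on the vertices of $\Delta$; hence $g$ is orientation-reversing on $\Delta$ precisely when $h$ is on $\Delta_0$. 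So it suffices to treat $\Delta_0\in\calA_T(X)$.

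Next I would locate two vertices of $\Delta_0$ carrying the same cover index. Let $w_0,\dots,w_n$ be the vertices of $\Delta_0$. The geometric simplex $\Delta_0\subseteq X$ lies in the closed star of each of its vertices, so $\Delta_0\subseteq U_{i(w_j)}$ for every $j$, and therefore the sets $U_{i(w_0)},\dots,U_{i(w_n)}$ have common intersection containing $\Delta_0\neq\emptyset$. By the definition of $\mult(\calU)=k$, the number of \emph{distinct} indices among the $i(w_j)$ is at most $k$; since there are $n+1\geq k+1$ vertices, the pigeonhole principle yields $a\neq b$ with $i(w_a)=i(w_b)=:i$. In particular $w_a,w_b\in V_i$, and the edge $[w_a,w_b]$ of $\Delta_0$, being contained in the closed star of $w_a$, lies in $U_i$.

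Now I define $h$ to be the class in $\Pi_X(U_i,V_i)\subseteq\G$ of the family of paths that carries $w_a$ to $w_b$ and $w_b$ to $w_a$ along the edge $\epsilon:=[w_a,w_b]$, keeping every other point fixed; this is a legitimate element because both endpoints lie in $V_i$ and $\epsilon$ lies in $U_i$. The last and main step is to check that $\psi(h)$ fixes $\Delta_0$. Using the formula $g\cdot e=[\gamma_{v_0}^{-1}*\gamma_e*\gamma_{v_1}]$ for the action on edges, a short computation shows that $h$ fixes the edge $[w_a,w_b]$, fixes every edge $[w_c,w_d]$ with $c,d\notin\{a,b\}$, and interchanges $[w_a,w_c]$ with $[w_b,w_c]$ for each remaining vertex $w_c$: writing $\delta$ for the edge-path $w_a\to w_c$, the formula gives $h\cdot[w_a,w_c]=[\epsilon^{-1}*\delta]$, and the $2$-face $[w_a,w_b,w_c]$ of $\Delta_0$ supplies a homotopy rel endpoints from $\epsilon^{-1}*\delta$ to the edge-path $w_b\to w_c$, so $h\cdot[w_a,w_c]=[w_b,w_c]$. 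Thus $\psi(h)$ carries the $1$-skeleton of $\Delta_0$ onto itself, and since a simplex of $\calA(X)$ is uniquely determined by its $1$-skeleton we conclude $\psi(h)(\Delta_0)=\Delta_0$. As $h$ acts on the vertices by the transposition $(w_a\,w_b)$, it is orientation-reversing, which completes the argument. I expect the main obstacle to be exactly this verification that the edgewise action of $h$ preserves $\Delta_0$, where the genuinely simplicial structure — the presence of the filling $2$-faces — is what makes the computation go through.
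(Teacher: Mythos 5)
Your proof is correct and follows essentially the same route as the paper: the same conjugation trick $g=g_0hg_0^{-1}$ to reduce to a simplex of $\calA_T(X)$, the same pigeonhole argument using $\mult(\calU)\leq k\leq n$, and the same swap element of $\Pi_X(U_i,V_i)$ built from the edge paths, concluded by the fact that simplices of $\calA(X)$ are determined by their $1$-skeleta. Your explicit verification that the edgewise action preserves the $1$-skeleton (using the $2$-faces to supply the homotopies rel endpoints) is exactly the paper's computation with the affine parametrizations $e_{hk}$, and in fact your write-up fixes a small typo in the paper's statement of one of those homotopies.
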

\begin{proof}
 Observe first that we may assume $\Delta\subseteq \calA_T(X)$. In fact, by definition there exists
 $h\in \G$ such that $\Delta=h(\Delta')$, where $\Delta'\subseteq \calA_T(X)$. If $g\in\G$ leaves $\Delta'$ invariant
 and is orientation reversing on $\Delta'$, then the automorphism $hgh^{-1}\in \G$ leaves $\Delta$ invariant
 and is orientation reversing on $\Delta$.
 
Therefore, let $v_0,\ldots,v_n$ be the vertices of $\Delta$, where $\Delta$ is an $n$-simplex of $T\cong \calA_T(X)$. 
By construction, the geometric simplex $|\Delta|\subseteq |T|=X$ satisfies
$$
|\Delta|\subseteq \bigcap_{j=0}^n U_{i(v_j)}\ .
$$
In particular, the intersection $ \bigcap_{j=0}^n U_{i(v_j)}$ is non-empty. Since $\mult(\calU)< n+1$,
this implies that 
there exists $i_0\in I$ such that at least
 two vertices of $\Delta$ belong to the same $V_{i_0}$. Up to reordering, we may assume that $v_0\in V_{i_0}$, $v_1\in V_{i_0}$. For every $h,k\in \{0,\ldots,n\}$,
$h\neq k$, let $e_{hk}\colon [0,1]\to X$ be the affine parametrization of the edge of $|\Delta|\subseteq |T|=X$ starting at $v_h$ and ending at $v_k$.  Since the closed stars
of $v_0$ and $v_1$ are contained in $U_{i_0}$, the paths $\{e_{01},e_{10}\}$ define an element of $g\in \Pi_X(U_{i_0},V_{i_0})<\G$. For every $h=2,\ldots,n$,
the concatenation $e_{01}*e_{1h}$ is homotopic relative to the endpoints to $e_{0h}$, and the concatenation
$e_{10}*e_{0h}$ is homotopic relative to the endpoints to $e_{0h}$. Thus, 
$g$ leaves the $1$-skeleton of $\Delta$ invariant, and acts on the vertices of $\Delta$ by switching $v_0$ with $v_1$, while
leaving all the other vertices fixed. Since simplices of $\calA(X)$ are uniquely determined by their $1$-skeleton,
this implies that $g(\Delta)=\Delta$. Moreover, $g$ acts on the vertices of $\Delta$ as a transposition, whence the conclusion.
 \end{proof}

  \subsection*{Alternating chains}
   Let $c\in C_n(\calA(X))$ be a simplicial chain, i.e.~let
  $c=\sum_{\sigma\in\Theta(n)} a_\sigma\cdot \sigma\in C_n(\calA(X))$, where $a_\sigma=0$ for all but a finite number of $\sigma\in \Theta(n)$. 
  We say that $c$ is \emph{alternating} if the following condition holds:
  if $\sigma=(\Delta,(v_0,\ldots,v_n))$, $\sigma'=(\Delta,(v_{\tau(0)},\ldots,v_{\tau(n)}))$ are algebraic simplices
which can be obtained one from the other via a permutation $\tau$ of their vertices, then $a_\sigma=\varepsilon(\tau)a_{\sigma'}$
(here $\varepsilon(\tau)=\pm 1$ denotes the sign of $\tau$).

The linear operator
$\alt_*\colon C_*(\calA(X))\to C_*(\calA(X))$ such that
$$
\alt(\Delta,(v_0,\ldots,v_j))=\frac{1}{(j+1)!} \sum_{\tau\in\mathfrak{S}_{j+1}} \varepsilon(\tau)(\Delta,(v_{\tau(0)},\ldots,v_{\tau(j)})) 
$$
is well defined and homotopic to the identity. 
Therefore, any homology class in $H_*(\calA(X))$ may be represented by an alternating cycle.

\subsection*{Diffusion of chains}
The fundamental step in the proof of our main theorem is the following:

\begin{thm}\label{Thm:pre:van:toy:ex}
Let $c\in\calA(X)$ be an alternating $n$-cycle supported on $\calA'_T(X)$. 
Then there exists a probability measure $\mu\in\calM_0(\G)$ such that 
$$
\|\mu* c\|_1 \ \leq\ \frac{\|c\|_1}{2}\ .
$$
\end{thm}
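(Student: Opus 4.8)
The plan is to exploit the $\Gamma$-action on the set $\Theta(n)$ of algebraic $n$-simplices of $\calA(X)$ together with the amenable diffusion estimate of Lemma~\ref{oss-scelta-epsilon}. First I would decompose the set $\Theta'(n)$ of algebraic $n$-simplices supported on $\calA'_T(X)$ into its $\Gamma$-orbits $\{O_j\}_{j\in J}$; since $\calA'_T(X)$ is $\Gamma$-invariant, $\Gamma$ preserves $\Theta'(n)$, and because $c$ is a finite chain its support meets only finitely many orbits, say $O_1,\dots,O_m$. Writing $c=\sum_{j=1}^m c_j$ with $c_j=c|_{O_j}$, any diffusion operator $\mu\ast$ sends functions supported on $O_j$ to functions supported on $O_j$ (as $\gamma\cdot O_j=O_j$ for $\gamma\in\Gamma$), so the summands of $\mu\ast c=\sum_j \mu\ast c_j$ have pairwise disjoint support and $\|\mu\ast c\|_1=\sum_j\|\mu\ast c_j\|_1$. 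The very same computation shows that each orbit-sum $s_j:=\sum_{x\in O_j} c(x)$ is invariant under diffusion.

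The heart of the argument is the claim that every orbit-sum vanishes, i.e. $s_j=0$ for all $j$. Since $c$ is alternating, any algebraic simplex $x=(\Delta,(v_0,\dots,v_n))$ with a repeated vertex has $c(x)=0$ (apply the defining relation to the transposition fixing $x$), so only non-degenerate $x$, whose underlying simplex $\Delta$ is genuinely $n$-dimensional, can contribute. Grouping the simplices of $O_j$ by their underlying simplex, I would write $s_j=\sum_\Delta\big(\sum_{x\in A_\Delta} c(x)\big)$, where $A_\Delta\subseteq O_j$ is the (finite) set of $x\in O_j$ with underlying simplex $\Delta$. For each such $n$-dimensional $\Delta\subseteq\calA'_T(X)$, every element of $A_\Delta$ is non-degenerate, and Lemma~\ref{orientation-lemma} furnishes $g=g_\Delta\in\Gamma$ fixing $\Delta$ and acting on its vertices by a transposition. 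This $g$ therefore defines a fixed-point-free involution of $A_\Delta$ (it preserves $O_j$ because $g\in\Gamma$, preserves $\Delta$, and squares to the identity on the simplices of $\Delta$), while the alternating condition gives $c(g\cdot x)=-c(x)$. Pairing each $x\in A_\Delta$ with $g\cdot x$ then yields $\sum_{x\in A_\Delta} c(x)=0$, and summing over $\Delta$ gives $s_j=0$.

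It remains to realize the diffusion estimate on all $m$ relevant orbits by a single measure. For each $j$, Lemma~\ref{oss-scelta-epsilon}, applied to the amenable group $\Gamma$ acting transitively on $O_j$ and to the finitely supported $c_j$, produces for any $\eta>0$ a measure with $\|\mu_j\ast c_j\|_1\le |s_j|+\eta=\eta$. Processing the orbits one at a time, and using that the orbit-sums remain diffusion-invariant (so Lemma~\ref{oss-scelta-epsilon} still applies after earlier diffusions) and that $\mu\ast$ is norm-non-increasing (so treating a later orbit cannot spoil the bound already achieved on an earlier one), the composition of the corresponding diffusion operators is again a diffusion operator $\mu\ast$ for a single $\mu\in\calM_0(\G)$, and it satisfies $\|\mu\ast c_j\|_1\le\eta$ for every $j$. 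Hence $\|\mu\ast c\|_1=\sum_{j=1}^m\|\mu\ast c_j\|_1\le m\eta$, which is $\le\|c\|_1/2$ as soon as $\eta\le\|c\|_1/(2m)$ (the case $c=0$ being trivial). I expect the main obstacle to be the verification that each orbit-sum vanishes, namely checking that the orientation-reversing automorphisms of Lemma~\ref{orientation-lemma} organize each $A_\Delta$ into sign-cancelling pairs while the alternating condition kills all degenerate simplices; by contrast, assembling the per-orbit estimates into one measure is a routine consequence of amenable diffusion and of its being norm-non-increasing. Note that this argument in fact drives $\|\mu\ast c\|_1$ below any prescribed positive bound, so the factor $1/2$ is attained with room to spare.
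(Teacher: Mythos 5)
Your proposal is correct and follows essentially the same route as the paper's proof: decompose $c$ into its $\Gamma$-orbit pieces, use Lemma~\ref{orientation-lemma} together with the alternating condition to show that each orbit-sum vanishes, apply Lemma~\ref{oss-scelta-epsilon} orbit by orbit, and compose the resulting diffusion operators into a single measure. Your two refinements --- explicitly discarding degenerate simplices and organizing each orbit into sign-cancelling pairs via a fixed-point-free involution, and invoking diffusion-invariance of orbit-sums in the induction (where the paper instead re-runs the alternation argument on the diffused chain) --- are accurate fillings-in of details that the paper compresses into ``this easily implies''.
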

\begin{proof}
 Since $c$ is a finite linear combination of simplices in $\Theta(n)$, there exist $\G$-orbits 
$ \Theta(n)_1,\ldots, \Theta(n)_s$ for the action of $\G$ on $\Theta(n)$ such that 
$$c = \sum_{j = 1}^{s} c_j\ ,$$
where
$$c_j = \sum_{\sigma\in\Theta(n)_j} a_\sigma \sigma$$
for every $j=1,\ldots,s$.

Of course we may suppose $c\neq 0$, otherwise the statement is trivial. Let us set $\eta = \|c\|_1/(2 s)>0$. We now show by induction that there exist probability measures $\mu_1,\ldots,\mu_s\in\calM_0(\G)$ 
satisfying the following property:
for every $j=1,\ldots, s$ and every $1\leq i\leq j$
\begin{equation}\label{inductivemu}
\|\mu_j*(\mu_{j-1}*(\ldots*(\mu_1*c_i)))\|_1\leq \eta\ .
\end{equation}
Let $i=j=1$, and
let $f_1\in \ell_0(\Theta(n))$ be the function associated to $c_1$ (by construction, $f_1$ is supported on $\Theta(n)_1$).
Since $\supp(f_1)$ is finite and contained in a single $\G$-orbit, we can  apply Lemma~\ref{oss-scelta-epsilon} to  $f_1$ (now considered as a function on $\Theta(n)_1$), thus
obtaining a   measure $\mu_1\in\calM_0(\G)$ such that 
\begin{equation}\label{formula:diffusione:toy}
\| \mu_1\ast c_1\|_1=\| \mu_1 \ast f_1 \|_1 \leq \Big\lvert \sum_{\sigma \in \, \Theta(n)_1} f_1(\sigma) \Big\rvert + \eta\ .\end{equation}

Since $c$ is supported on $\calA'_T(X)$, 
Lemma~\ref{orientation-lemma} implies that 
 for any algebraic simplex 
$\sigma = (\Delta,(v_0,\ldots,v_n)) \in \, \Theta(n)_1$
there exists an element $g\in \Gamma$ 
such that $g\cdot \sigma = (\Delta,(v_{\tau(0)},\ldots,v_{\tau(n)}))$, where $\tau$ is an odd permutation. 
Using that $c$ is alternating,
this easily implies that 
$\sum_{\sigma \in \, \Theta(n)_1} f_1(\sigma)= 0$ which, together with~\eqref{formula:diffusione:toy}, gives 
\begin{equation}\label{formula:diffusione:toy2}
\| \mu_1 \ast c_1 \|_1 \leq \eta \ .\end{equation}
This settles the case $j=1$.

Suppose now we have constructed probability measures $\mu_1,\ldots,\mu_l$ satisfying property~\eqref{inductivemu}, and let
$h=\mu_l*(\mu_{l-1}*(\ldots*(\mu_1*f_{l+1})))$. By construction $\supp(h)$ is finite and contained in $\Theta(n)_{l+1}$. Moreover, since
diffusion preserves the alternation of chains and $\calA'_T(X)$ is $\G$-invariant, the finite chain associated to $h$ is alternating and supported on $\calA'_T(X)$. This allows us to argue as above to 
obtain a probability measure $\mu_{l+1}\in\calM_0(\G)$ such that $\|\mu_{l+1}\ast g\|_1\leq \eta$, which implies 
\begin{align*}
\|\mu_{l+1}*(\mu_{l}*(\ldots*(\mu_1*c_{l+1})\ldots))\|_1
& =\|\mu_{l+1}*(\mu_{l}*(\ldots*(\mu_1*f_{l+1})\ldots))\|_1\\ &=
\|\mu_{l+1}\ast g\|_1\leq \eta\ .
\end{align*}
Moreover, for every $1\leq i\leq l$, if $c'=\mu_i*(\mu_{i-1}*(\ldots*(\mu_1*c_i)))$ then we know by our inductive hypothesis
that $\|c'\|_1\leq \eta$, hence
$$
\|\mu_{l+1}*(\mu_{l}*(\ldots*(\mu_1*c_{i})))\|_1=\|\mu_{l+1}*(\mu_{l}*(\ldots*(\mu_{i+1}*c')\ldots))\|_1\leq \|c'\|_1\leq \eta\ ,
$$
where the second-last inequality is due to the fact that diffusion is always norm non-increasing. This proves the inductive step,
hence inequality~\eqref{inductivemu} for every $j=1,\ldots, s$ and every $1\leq i\leq j$.

Let now $\mu\in\calM_0(\G)$ be such that
$$\mu*c=\mu_s*(\mu_{s-1}*(\ldots*(\mu_1*c)\ldots ))\ .$$ 
By~\eqref{inductivemu} we have
\begin{align*}
\|\mu*c\|_1&=\Big\|\mu_s*\Big(\mu_{s-1}*\Big(\ldots*\Big(\mu_1*\Big(\sum_{j=1}^s c_j\Big)\Big)\ldots\Big)\Big)\Big\|_1\\ &=
\Big\|\sum_{j=1}^s\mu_s*\big(\mu_{s-1}*\big(\ldots*\big(\mu_1* c_j\big)\ldots\big)\big)\big\|_1 \\ &\leq
\sum_{j=1}^s \big\|\mu_s*\big(\mu_{s-1}*\big(\ldots*\big(\mu_1* c_j\big)\ldots\big)\big)\big\|_1\\ &\leq s\cdot \eta\leq \frac{\|c\|_1}{2}\ .
\end{align*}
\end{proof}

We are now ready to conclude the proof of Theorem~\ref{main:thm}. 
Recall that, starting from a generic element $\alpha\in H_n(X)$, we constructed an element $\alpha_\calA\in H_n(\calA(X))$ and proved that
$\iota_n^X(\alpha)=0$ in $H_n^{\ell^1}(X)$ provided that $\iota_n^\calA(\alpha_\calA)=0$ in $H_n^{\ell^1}(\calA(X))$ (see Proposition~\ref{bastaA}).
More precisely, the class $\alpha_\calA$ is the image of a class $\alpha_T\in H_n(T)$ under the map induced on simplicial homology by the inclusion
$T\cong \calA_T(X)\hookrightarrow \calA(X)$.  In particular, we may assume that $\alpha_\calA$ is represented by a cycle $c\in C_n(\calA(X))$ supported
on $\calA_T(X)$ (hence, on $\calA'_T(X)$). By applying the operator $\alt_n$ described above, we can also suppose that $c$ is alternating (of course, 
if $c$ is supported on $\calA'_T(X)$ then also $\alt_n(c)$ is supported on $\calA'_T(X)$).

We now 
inductively define
cycles $c_i\in C_n(\calA'_T(X))$, and chains $b_i\in C_{n+1}(\calA(X))$, $i\in\mathbb{N}$, such that the following conditions hold:
\begin{enumerate}
\item $c_0=c$;
\item $c_i$ is alternating;
\item $\|c_i\|_1\leq 2^{-i} \|c\|_1$;
\item $c_i-c_{i+1}=\partial b_i$;
\item $\|b_i\|_1\leq K_n 2^{-i} \|c\|_1$, where $K_n$ is a universal constant.
\end{enumerate}
For $i=0$ we just set $c_0=c$, and there is nothing to prove. Suppose we have constructed $c_0,\ldots,c_i$ satisfying the properties listed above. 
By Theorem~\ref{Thm:pre:van:toy:ex}, there exists a measure $\mu\in \calM_0(\G)$ such that $$\|\mu*c_i\|_1\leq \frac{\|c_i\|}{2}\leq 2^{-i-1} \|c\|_1 \ .$$
We then set $c_{i+1}=\mu*c_i$, and (3) is satisfied.
The diffusion of an alternating chain is alternating, hence also (2) holds. Finally, Lemma~\ref{homotopy:lemma} provides a chain $b_i\in C_{n+1}(\calA(X))$ satisfying
(4) and (5).

Let us now consider the infinite chain $$\overline{b}=\sum_{i\in\mathbb{N}} b_i\ .$$
By (5), the $\ell^1$-norm of $\overline{b}$ is finite, hence $\overline{b}\in C_{n+1}^{\ell^1}(\cal A(X))$. Moreover,
using (4) it is easy to check that
$$
c=c_0=\partial \overline{b}\ .
$$
This shows that $\iota_n(\alpha_\calA)=\iota_n([c])=0$ in $H_n^{\ell^1}(\calA(X))$.
Thanks to Proposition~\ref{bastaA}, this concludes the proof
of Theorem~\ref{main:thm}.

\begin{rem}\label{aspherical:rem}
In this paper, bounded cohomology was exploited (via L\"oh's Translation Principle) 
 to reduce the vanishing of $\iota_n^X(\alpha)$ to the vanishing of $\iota_n^\calA(\alpha_\calA)$ (see Proposition~\ref{bastaA}). After that, our constructions only involved diffusion of cycles. 
 In fact, if $X$ is an aspherical CW complex, then $X$, $|\calK(X)|$ and $|\calA(X)|$ are homotopy equivalent. In this case, our main theorem can be obtained via diffusion of cycles only, without referring to bounded cohomology.
\end{rem}

\bibliographystyle{amsalpha}
\bibliography{biblionote}

\end{document}